\documentclass{amsart}%kh
\usepackage{amssymb}
\usepackage{amsmath}
\usepackage{latexsym}
\usepackage{eepic}
\usepackage{epsfig}
\usepackage{graphicx}
\usepackage{amscd}
\usepackage{eufrak}
\usepackage{mathrsfs}
\usepackage[english]{babel}
\usepackage[dvips]{color}

%\DeclareMathAlphabet{\mathpzc}{OT1}{pzc}{m}{it}
%\newenvironment{proof}{\textbf{Proof.}} {\hspace{1cm}Q.E.D.\newline}
\newtheorem{theorem}[equation]{Theorem}
\newtheorem{theorem-definition}[equation]{Theorem-Definition}
\newtheorem{lemma-definition}[equation]{Lemma-Definition}
\newtheorem{definition-prop}[equation]{Proposition-Definition}

\newtheorem{prop}[equation]{Proposition}

\newtheorem{cor}[equation]{Corollary}

\theoremstyle{definition}
\newtheorem{exam}[equation]{Example}

\newcommand{\llbr}{[\negthinspace[}
\newcommand{\rrbr}{]\negthinspace]}

\theoremstyle{definition}
\newtheorem{example}[equation]{Example}

\newcommand{\N}{\ensuremath{\mathbb{N}}}
\newcommand{\Z}{\ensuremath{\mathbb{Z}}}
\newcommand{\Q}{\ensuremath{\mathbb{Q}}}

\newcommand{\R}{\ensuremath{\mathbb{R}}}
\newcommand{\C}{\ensuremath{\mathbb{C}}}

\newcommand{\A}{\ensuremath{\mathbb{A}}}

\newcommand{\G}{\ensuremath{\mathbb{G}}}

\newcommand{\cX}{\ensuremath{\mathscr{X}}}

\newcommand{\cH}{\ensuremath{\mathscr{H}}}

\renewcommand{\R}{\ensuremath{\mathbb{R}}}
\renewcommand{\C}{\ensuremath{\mathbb{C}}}

\renewcommand{\A}{\ensuremath{\mathbb{A}}}

\renewcommand{\cH}{\ensuremath{\mathscr{H}}}

\newcommand{\Spec}{\ensuremath{\mathrm{Spec}\,}}

\newcommand{\red}{\mathrm{red}}

\newcommand{\weight}{\mathrm{wt}}

\newcommand{\lct}{\mathrm{lct}}
\newcommand{\Sk}{\mathrm{Sk}}

\newcommand{\an}{\mathrm{an}}
\newcommand{\bir}{\mathrm{bir}}
\newcommand{\divi}{\mathrm{div}}
\newcommand{\mon}{\mathrm{mon}}
\newcommand{\spe}{\mathrm{sp}}
\newcommand{\sncd}{\mathrm{sncd}}

%\numberlike{equation}{theorem}
%\newcommand{\eqq}{\subsubsection*{
%}\refstepcounter{subsubsection}{\thesubsubsection}}
%\renewcommand{\theequation}{\eqq}
\numberwithin{equation}{subsection}

\newcommand{\sss}{\vspace{5pt} \subsubsection*{ }\refstepcounter{equation}{{\bfseries(\theequation)}\ }}

\hyphenpenalty=6000 \tolerance=10000

\author[Johannes Nicaise]{Johannes Nicaise}
\address{KU Leuven\\
Department of Mathematics\\
Celestijnenlaan 200B\\3001 Heverlee \\
Belgium} \email{johannes.nicaise@wis.kuleuven.be}

\begin{document}
\title{Berkovich skeleta and birational geometry}

\begin{abstract}
  We give a survey of joint work with Mircea Musta\c{t}\u{a} and Chenyang Xu on the connections between the geometry of Berkovich spaces over the field of Laurent series and the birational geometry of one-parameter degenerations of smooth projective varieties. The central objects in our theory are the {\em weight function} and the {\em essential skeleton} of the degeneration. We tried to keep the text self-contained, so that it can serve as an introduction to Berkovich geometry for birational geometers.
\end{abstract}
\maketitle

\section{Introduction}
 Let $R$ be a complete discrete valuation ring with residue field $k$ and quotient field $K$.
    The main example to keep in mind is
  $R=\C \llbr t \rrbr$. The discrete valuation on $K$ gives rise to a non-archimedean absolute value on $K$ that
   one can use to develop a theory of analytic geometry over $K$. The theory that we will use is the one introduced by Berkovich in \cite{berk-book}.
  The principal purpose of these notes is to describe some interactions between Berkovich geometry over $K$ and the birational geometry of degenerations of algebraic varieties over $R$. For a nice introduction to related results over trivially valued base fields, we refer to \cite{payne}.

   In fact, we will use only a small part of the theory of Berkovich spaces: we are mainly interested in the underlying topological space of the analytification of
    an algebraic $K$-variety. The structure of this space can be described in terms of classical valuation theory; this will be explained in
    Section \ref{sec-berksk}.

\medskip

Let $X$ be a connected, smooth and proper $K$-variety of dimension $n$. We denote by $X^{\an}$ the Berkovich analytification of $K$.
 An $sncd$-model of $X$ is a regular scheme $\cX$ of finite type  over $R$, endowed with an isomorphism of $K$-schemes $\cX_K\to X$, such that the special fiber
  $\cX_k$ is a divisor with strict normal crossings.
 To any proper $sncd$-model $\cX$
 of $X$ over $R$ one can attach a subspace $\Sk(\cX)$ of $X^{\an}$, called the Berkovich skeleton of $X$, which is canonically homeomorphic to the dual
 intersection complex of the strict normal crossings divisor $\cX_k$. This skeleton can be viewed as the space of real valuations on the function field of $X$ that extend the discrete valuation on $K$ and that are monomial with respect to $\cX_k$. The most important property of the skeleton $\Sk(\cX)$ is that it controls the homotopy type of $X^{\an}$: it is a strong deformation retract of $X^{\an}$. This provides an interesting link between the geometry of $X^{\an}$ and the birational geometry of models of $X$.

 If $X$ has dimension one and genus at least one, then $X$ has a unique minimal $sncd$-model, and thus a canonical Berkovich skeleton.
  In higher dimensions, minimal $sncd$-models no longer exist. Nevertheless, one can ask whether it is still possible to construct a canonical skeleton in $X^{\an}$.
  We will present two constructions, which we developed in collaboration with Mircea Musta\c{t}\u{a} \cite{MuNi} and Chenyang Xu \cite{NiXu}, respectively. The first construction is based on work of Kontsevich and Soibelman on degenerations of Calabi-Yau varieties and Mirror Symmetry \cite{KS}; the second one relies on the Minimal Model Program, and in particular on the results in \cite{dFKX}. As we will see, both approaches yield the same result. We assume in the remainder of this introduction that the residue field $k$ has characteristic zero.

 The main idea behind the first approach is the following. Each proper $sncd$-model $\cX$ of $X$ gives rise to a Berkovich skeleton $\Sk(\cX)$ in $X^{\an}$.
 We will use pluricanonical forms $\omega$ on $X$ to single out certain essential faces of the simplicial complex $\Sk(\cX)$, which must be contained in the skeleton
  of {\em every} proper $sncd$-model of $X$. The union of these $\omega$-essential faces is called the Kontsevich-Soibelman skeleton of $(X,\omega)$ and denoted by $\Sk(X,\omega)$. It only depends on $X$ and $\omega$, but not on the choice of $\cX$. Taking the union of the skeleta $\Sk(X,\omega)$ over all non-zero pluricanonical forms $\omega$ on $X$, we obtain a subspace of $X^{\an}$ with piecewise affine structure that we call the {\em essential skeleton} of $X$ and that we denote by $\Sk(X)$.

 This construction has the merit of being quite natural and elementary, but it is not at all clear from the definition what $\Sk(X)$ looks like or whether $\Sk(X)$ is still a strong deformation retract of $X^{\an}$. Therefore, we will also consider a second approach. As we have already mentioned, minimal $sncd$-models usually do not exist if the dimension of $X$ is at least two, but we can enlarge our class of models in such a way that minimal models exist and such that we can still use the members of this class to describe the homotopy type of $X^{\an}$. The Minimal Model Program suggests to consider so-called $dlt$-models of $X$, which should be viewed as proper $sncd$-models with mild singularities. We can define the skeleton $\Sk(\cX)$ of such a $dlt$-model by simply ignoring the singularities. The theory of minimal models guarantees that minimal $dlt$-models exist if the canonical sheaf of $X$ is semi-ample, which means that some tensor power is generated by global sections. Minimal $dlt$-models are not unique, but the skeleton $\Sk(\cX)$ does not depend on the choice of a minimal $dlt$-model $\cX$. By a careful analysis of the steps in the Minimal Model Program, it was proven in \cite{dFKX} that the skeleton $\Sk(\cX)$ can be obtained from the skeleton of any proper $sncd$-model of $X$ by a sequence of elementary collapses. This implies that $\Sk(\cX)$ is still a strong deformation retract of $X^{\an}$.

 One of the main results of \cite{NiXu} is that these two constructions yield the same result: if the canonical sheaf of $X$ is semi-ample, then the essential skeleton $\Sk(X)$ of $X$ coincides with the skeleton of any minimal $dlt$-model of $X$. In particular, $\Sk(X)$ is a strong deformation retract of $X^{\an}$. The semi-ampleness condition can be understood as follows: it guarantees that $X$ has enough pluricanonical forms to detect all the important pieces of the skeleton of a proper $sncd$-model.

\subsection*{Acknowledgements} I am grateful to Sam Payne for helpful comments on an earlier version of this text.

\subsection*{Notation} We denote by
$R$ a complete discrete valuation ring with maximal ideal
$\frak{m}$, residue field $k$ and quotient field $K$. We denote by
$v_K$ the discrete valuation $K^{\ast}\twoheadrightarrow \Z$. We define
an absolute value on $K$ by setting $|x|_K=\exp(-v_K(x))$ for every
element $x$ of $K^*$. A variety over a field $F$ is a separated $F$-scheme of finite type.
 If $\alpha$ and $\beta$ are elements of $\R^m$ for some positive integer $m$, then we denote by $\alpha\cdot \beta$ their scalar product $\sum_{i=1}^m\alpha_i\beta_i$.

 \newpage

\section{The Berkovich skeleton of an $sncd$-model}\label{sec-berksk}
\subsection{Birational  points}
\sss \label{sss-defan}
Let $X$ be a connected and smooth $K$-variety of dimension $n$.
 We denote by $X^{\an}$ the Berkovich analytification of $X$
and by $i\colon X^{\an}\to X$ the analytification morphism. We will mainly be interested in the underlying topological space of $X^{\an}$, which is
easy to describe. As a set, $X^{\an}$ consists of the couples $(x,|\cdot|)$ where $x$ is a scheme-theoretic point of $X$ and $|\cdot|\colon\kappa(x)\to \R$ is an absolute value on
 the residue field $\kappa(x)$ of $X$ at $x$ that extends the absolute value  $|\cdot|_K$ on $K$. The analytification map $i\colon X^{\an}\to X$ is simply the forgetful map that sends
 a couple $(x,|\cdot|)$ to $x$. The topology on $X$ is the coarsest topology such that the following two properties are satisfied:
 \begin{enumerate}
 \item the topology on $X^{\an}$ is finer than the Zariski topology, that is, the map $i\colon X^{\an}\to X$ is continuous;
 \item for every Zariski-open subset $U$ of $X$ and every regular function $f$ on $U$, the map
 $$|f|\colon i^{-1}(U)\to \R^+\colon (x,|\cdot|)\mapsto |f(x)|$$ is continuous.
 \end{enumerate}
Note that the definition of $|f|$ makes sense because $f(x)$ is an element of the residue field $\kappa(x)$.
 We will often denote a point of $X^{\an}$ simply by $x$, leaving the absolute value $|\cdot|$ implicit in the notation. It is convenient in many situations
 to switch between the multiplicative and additive viewpoint: we will denote by $v_x$ the real valuation
 $$v_x\colon \kappa(x)^{\ast}\to \R\colon a\mapsto -\ln |a|.$$ As usual, we extend it to zero by setting $v_x(0)=+\infty$.
 The {\em residue field} of $X^{\an}$ at a point $x$ is defined as the completion of the residue field $\kappa(i(x))$ of $X$ at $i(x)$ with respect to the absolute value $|\cdot|$.
 It is a complete valued extension of the field $K$, which we denote by $\cH(x)$. The valuation ring of $\cH(x)$ will be denoted by $\cH(x)^o$.
  If $f$ is a rational function on $X$ that is defined at $i(x)$, then we can think of $f(i(x))\in \kappa(i(x))$ as an element of $\cH(x)$, and we denote this element by $f(x)$.

\sss The topological space $X^{\an}$ is Hausdorff, and it is compact if and only if $X$ is proper over $K$.
 If $X$ is a curve, then there exists a simple classification of the points on $X^{\an}$ and one can draw a fairly explicit picture of $X^{\an}$; see for instance Section 5.1 in \cite{baker}.
    If the dimension of $X$ is at least two, it is much more difficult to give a precise description of the whole space $X^{\an}$. We will see in the following sections, however, that one can produce many interesting points
in this space using the birational geometry of $X$, and that these points suffice to control the homotopy type of $X^{\an}$.

\sss The set
$X^{\bir}$ of birational points of $X^{\an}$ is defined as
the inverse image under $i\colon X^{\an}\to X$ of the generic point of $X$. In the additive notation, this is simply
 the set of real valuations on the
function field $K(X)$ of $X$ that extend the discrete valuation $v_K$ on
$K$.  We endow $X^{\bir}$
with the topology induced by the Berkovich topology on $X^{\an}$. We will see in \eqref{sss-defret} that the inclusion $X^{\bir}\to X^{\an}$ is a homotopy equivalence if $k$ has characteristic zero. By its very definition, $X^{\bir}$ is a birational invariant of $X$, so we can hope
 to recover interesting birational invariants of $X$ from this topological space.

\subsection{Models}
\sss We will define certain subclasses of birational points using the geometry of $R$-models of $X$.
 An $R$-model of $X$ is a flat separated $R$-scheme of finite
type $\cX$ endowed with an isomorphism of $K$-schemes $\cX_K\to
X$. Note that we do not impose any properness condition on $X$ or $\cX$. We say that $\cX$ is an $sncd$-model of $X$ if $\cX$ is regular and its special fiber $\cX_k$ is a divisor with strict normal crossings.

\sss Let $\cX$ be an $R$-model of $X$ and let $x$ be a point of $X^{\an}$. We say that $x$ has a {\em center} on $\cX$
 if the canonical morphism $\Spec \cH(x)\to X$ extends to a morphism $\Spec \cH(x)^o\to \cX$. Such an extension is unique if it exists, by
 the valuative criterion of separatedness. If it exists, the center of $x$ on $\cX$ is defined
 as the image of the closed point of $\Spec \cH(x)^o$ in $\cX$ and denoted by $\spe_{\cX}(x)$. Note that the point $\spe_{\cX}(x)$ always lies on the special fiber $\cX_k$ of $\cX$ because $\frak{m}$ is contained in the maximal ideal of $\cH(x)^o$. We denote by $\widehat{\cX}_\eta$ the set of points on $X^{\an}$
 that have a center on $\cX$.
  If $\cX$ is proper over $R$, then $\widehat{\cX}_\eta=X^{\an}$ by the valuative criterion of properness.
  The map $$\spe_{\cX}\colon \widehat{\cX}_\eta\to \cX_k$$ is called the reduction map or specialization map. It has the peculiar property of being {\em anti-continuous}, which means that the inverse image of an open set is closed.

\begin{exam}
If $X=\A^1_K=\Spec K[T]$ and $\cX=\A^1_R$, then
$$\widehat{\cX}_\eta=\{x\in X^{\an}\,|\,|T(x)|\leq 1\}$$ and
 $\spe_{\cX}(x)$ is the reduction of
 $T(x)\in \cH(x)^o$ modulo the maximal ideal of $\cH(x)^o$ (viewed as a point of $\cX_k=\Spec k[T]$).
\end{exam}

\sss Assuming a bit more technology \cite[\S1]{berk-vanish}, we can give an equivalent description of
$\widehat{\cX}_\eta$ and $\spe_{\cX}$. If we denote by $\widehat{\cX}$ the
 formal $\frak{m}$-adic completion of $\cX$, then the
generic fiber  of $\widehat{\cX}$ is a compact
 analytic domain in $X^{\an}$ whose underlying set is precisely $\widehat{\cX}_\eta$.
 The reduction map $\spe_{\cX}$ is the map underlying the specialization morphism of locally ringed spaces
$$\spe_{\cX}\colon \widehat{\cX}_\eta\to \widehat{\cX}.$$

\subsection{Divisorial and monomial points}\label{subsec-divmon}

\sss  Let $\cX$ be a normal $R$-model of $X$. If $E$ is an irreducible component of the special fiber $\cX_k$
 with generic point $\xi$, then the fiber $\spe^{-1}_{\cX}(\xi)$
 consists of a unique point, which we call the divisorial
 point of $X^{\an}$ associated with $(\cX,E)$. It is the birational point $x$ on $X^{\an}$ that corresponds to the discrete
 valuation $v_x$ on $K(X)$ with valuation ring $\mathcal{O}_{\cX,\xi}$, normalized in such a way
 that $v_x$ extends the discrete valuation $v_K$ on $K$. Thus if $f$ is a non-zero rational function on $X$, then
 $$v_x(f)=\frac{1}{N}\mathrm{ord}_E f$$ where $N$ denotes the multiplicity of $E$ in the Cartier divisor $\cX_k$ on $\cX$ and $\mathrm{ord}_E f$ is the order of $f$ along $E$.
  A point of $X^{\an}$ is called divisorial if it is the divisorial point associated with some couple $(\cX,E)$ as above.
 We will
 denote the set of divisorial points by $X^{\divi}\subset
 X^{\bir}$. It is not difficult to show that the set $X^{\divi}$ is dense in $X^{\an}$; see for instance \cite[2.4.12]{MuNi}.

\sss The set of divisorial points is totally disconnected. We will define a more general class of points that should be viewed as
 some kind of interpolations between divisorial points: the monomial points.
  Let $\cX$ be an $sncd$-model of $X$ and let $E_1,\ldots,E_r$ be distinct irreducible components of the special fiber $\cX_k$ with respective
  multiplicities $N_1,\ldots,N_r$ in $\cX_k$, and assume that the intersection $\cap_{i=1}^rE_i$ is non-empty. Let $\alpha=(\alpha_1,\ldots,\alpha_r)$ be a tuple of positive real numbers such that $\sum_{i=1}^r\alpha_iN_i=1$ and let $\xi$ be a generic point of  $\cap_{i=1}^rE_i$ (by the definition of an $sncd$-model, this intersection is regular and of pure dimension $n+1-r$, but it is not necessarily connected).

 \begin{prop}\label{prop-mon}
 There exists a unique minimal real valuation
 $$v\colon \mathcal{O}_{\cX,\xi}\setminus \{0\}\to \R^+$$ such that $v(T_i)=\alpha_i$ for every $i$ in $\{1,\ldots,r\}$ and every local equation
 $T_i=0$ for $E_i$  in $\cX$ at $\xi$.
 \end{prop}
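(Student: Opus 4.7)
The plan is to realize $v$ as the monomial valuation of weight $\alpha$ in the power series expansion on $\widehat{\mathcal{O}}_{\cX,\xi}$, and then to establish minimality by a direct estimate on any admissible valuation. By the $sncd$-hypothesis, the local ring $\mathcal{O}_{\cX,\xi}$ is regular of dimension $r$ and the elements $T_1,\ldots,T_r$ form a regular system of parameters. Cohen's structure theorem identifies the $\frak{m}_\xi$-adic completion with $\kappa(\xi)\llbr T_1,\ldots,T_r\rrbr$, so every element of $\widehat{\mathcal{O}}_{\cX,\xi}$ has a canonical power series expansion $f=\sum_{m\in \N^r}c_mT^m$ with $c_m\in \kappa(\xi)$.

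I would then set
$$v(f)=\min\{\alpha\cdot m \,:\, c_m\neq 0\}$$
for $f\neq 0$. Positivity of the $\alpha_i$ forces the level set $\{m\in\N^r : \alpha\cdot m\leq C\}$ to be finite for each $C$, so the minimum is attained. The ultrametric inequality is immediate; for multiplicativity $v(fg)=v(f)+v(g)$ I would work with the $\alpha$-initial form $\mathrm{in}_\alpha(f)\in \kappa(\xi)[T_1,\ldots,T_r]$, the sum of those monomials $c_mT^m$ realising the minimum, and observe that $\mathrm{in}_\alpha(fg)=\mathrm{in}_\alpha(f)\cdot\mathrm{in}_\alpha(g)$ is nonzero because the polynomial ring is a domain. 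Pulling back along the injection $\mathcal{O}_{\cX,\xi}\hookrightarrow \widehat{\mathcal{O}}_{\cX,\xi}$ yields a valuation on $\mathcal{O}_{\cX,\xi}\setminus \{0\}$ with values in $\R^+$ and $v(T_i)=\alpha_i$.

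For minimality and uniqueness, let $w$ be any valuation on $\mathcal{O}_{\cX,\xi}\setminus \{0\}$ with values in $\R^+$ satisfying $w(T_i)=\alpha_i$. Writing $\epsilon=\min_i\alpha_i>0$, one has $w(f)\geq n\epsilon$ for $f\in \frak{m}_\xi^n$, so $w$ is $\frak{m}_\xi$-adically continuous and extends to the completion. Every nonzero element of the coefficient field $\kappa(\xi)$ is a unit of $\widehat{\mathcal{O}}_{\cX,\xi}$ and thus has $w$-value zero. Expanding $f\in\mathcal{O}_{\cX,\xi}$ as a power series, truncating at a degree $N$ with $N\epsilon>v(f)$, and applying the ultrametric inequality together with $w(T^m)=\alpha\cdot m$ yields $w(f)\geq v(f)$. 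Thus $v$ is the unique minimal such valuation. Independence from the choice of local equations is then automatic: if $T_i'=u_iT_i$ with $u_i\in \mathcal{O}_{\cX,\xi}^\times$, the bounds $v(u_i)\geq 0$ and $v(u_i^{-1})\geq 0$ force $v(u_i)=0$, hence $v(T_i')=\alpha_i$.

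The main obstacle is the multiplicativity of $v$ when $\alpha_1,\ldots,\alpha_r$ are $\Q$-linearly dependent: the $\alpha$-initial form of a power series may involve several distinct monomials whose leading terms could in principle cancel, so one is forced to route the argument through the domain property of $\kappa(\xi)[T_1,\ldots,T_r]$ rather than simply picking off a leading monomial. A secondary subtlety is the passage between $\mathcal{O}_{\cX,\xi}$ and its completion in the minimality argument, which relies on Noetherianity and the positivity of $w$ on the maximal ideal.
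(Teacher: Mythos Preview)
Your argument is correct when $R$ has equal characteristic, and in that setting it is essentially the simplification the paper itself points out. But the proposition is stated for an arbitrary complete discrete valuation ring $R$, and in mixed characteristic your appeal to Cohen's structure theorem fails: since $\xi$ lies on the special fiber one has $\pi\in\frak{m}_\xi$, so $\mathcal{O}_{\cX,\xi}$ does not contain a field and its completion is \emph{not} isomorphic to $\kappa(\xi)\llbr T_1,\ldots,T_r\rrbr$. There is then no canonical power-series expansion with coefficients in $\kappa(\xi)$ on which to run your initial-form argument for multiplicativity or your truncation argument for minimality.

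The paper's remedy is to replace the field-coefficient expansion by an admissible expansion $f=\sum_\beta c_\beta T^\beta$ in which each $c_\beta$ is either zero or a unit in $\widehat{\mathcal{O}}_{\cX,\xi}$; such expansions always exist but are not unique. Your minimality argument survives this change verbatim, since units still have $w$-value zero for any valuation $w$ with values in $\R^+$. What does not survive directly is the multiplicativity step: with non-unique expansions there is no well-defined $\alpha$-initial form lying in a polynomial ring over a field whose domain property you can invoke, so one must first show that the value $v(f)$ is independent of the chosen admissible expansion and then establish $v(fg)=v(f)+v(g)$ by other means. This is precisely the extra work the paper defers to \cite{MuNi}.
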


 \sss \label{sss-monexpl} Proposition \ref{prop-mon} can be proven by combining \cite[2.4.6 and 3.1.6]{MuNi}. We will not give a complete proof here, but only sketch how the valuation $v$ can be constructed. For every $i$ in $\{1,\ldots,r\}$ we choose a local equation $T_i=0$ of $E_i$ in $\cX$ at $\xi$. Then the elements $T_i$ form a regular system of local parameters in the local ring $\mathcal{O}_{\cX,\xi}$. It is not difficult to show that every element $f$ in $\mathcal{O}_{\cX,\xi}$ can be written in the completed local ring $\widehat{\mathcal{O}}_{\cX,\xi}$
  as a power series
  \begin{equation}\label{eq-adm}f=\sum_{\beta\in \N^r}c_\beta T^\beta \end{equation}
   where each coefficient $c_\beta$ is either zero or a unit in $\widehat{\mathcal{O}}_{\cX,\xi}$. Such an expansion is not unique, but one can show that the expression
  \begin{equation}\label{eq-monexpl0}
  v(f):=\min\{\alpha\cdot \beta\,|\,\beta\in \N^r,\,c_\beta\neq 0\} \end{equation}
   does not depend on any choices and that it defines a valuation $v$ with the required properties.
   If $R$ has equal characteristic, then the arguments can be simplified by using the fact that $\widehat{\mathcal{O}}_{\cX,\xi}$ is isomorphic to the power series ring $\kappa(\xi)\llbr T_1,\ldots,T_r\rrbr$ by Cohen's structure theorem.

 \sss The valuation $v$ in Proposition \ref{prop-mon} extends to a real valuation $v\colon K(X)^{\ast}\to \R$.
  It extends the discrete valuation $v_K$ on $K$: if $\pi$ is a uniformizer in $R$, then in the ring $\mathcal{O}_{\cX,\xi}$
 we can write $$\pi=u\prod_{i=1}^r T_i^{N_i}$$ with $u$ a unit,  so that $v(\pi)=\sum_{i=1}^r\alpha_iN_i=1$.
  Thus $v$ defines a birational point $x$ on $X^{\an}$, which we call the
  monomial point associated with the data \begin{equation}\label{eq-mondata}
  (\cX,(E_1,\ldots,E_r),\alpha,\xi).
   \end{equation} The point $x$ belongs to $\widehat{\cX}_\eta$, and $\spe_{\cX}(x)=\xi$.
 We remark for later use that formula \eqref{eq-monexpl0} can be generalized as follows: if
  $$f=\sum_{\beta\in \N^r}c_\beta d_\beta T^\beta$$ where each coefficient $c_\beta$ is either zero or a unit in $\widehat{\mathcal{O}}_{\cX,\xi}$ and each coefficient $d_\beta$ belongs to $K$, then
  \begin{equation}\label{eq-monexpl}
  v(f)=\min\{v_K(d_\beta)+\alpha\cdot \beta\,|\,\beta\in \N^r,\,c_\beta\neq 0\}
  \end{equation}
   since we can rewrite $d_\beta$ as  the product of  $$\pi^{v_K(d_\beta)}=(u\prod_{i=1}^r T_i^{N_i})^{v_K(d_\beta)}$$ with a unit in $R$ to get an expansion for $f$ of the form
   \eqref{eq-adm}.

  \sss     A point on $X^{\an}$ is called monomial if it is the monomial point associated with a tuple of data as in \eqref{eq-mondata}; we will
   also say that such a point is monomial with respect to the model $\cX$. If $r=1$, then we get precisely the divisorial point associated with $(\cX,E_1)$. Thus every divisorial point is monomial. Conversely, the monomial point associated with \eqref{eq-mondata} is divisorial (possibly with respect to
    a different model $\cX$) if and only if
   the parameters $\alpha_i$ all belong to $\Q$ (see \cite[2.4.1]{MuNi}). The set of monomial points on $X^{\an}$ will be denoted by $X^{\mon}$. We have the following inclusions:
   $$X^{\divi}\subset X^{\mon}\subset X^{\bir}\subset X^{\an}.$$

\subsection{The Berkovich skeleton}\label{subsec-berksk}
\sss Let $\cX$ be an $sncd$-model of $X$. We define the Berkovich skeleton of $\cX$ as the set of all points of $X^{\an}$ that are monomial with respect to $\cX$,
 and we denote it by $\Sk(\cX)$. By construction, the Berkovich skeleton $\Sk(\cX)$ is a subspace of $\widehat{\cX}_\eta\cap X^{\mon}$. The importance of this object is that
 we can give an explicit description of the topology on $\Sk(\cX)$ and that this suffices to understand the homotopy type of $\widehat{\cX}_\eta$, as we will now explain.

\sss We first need to recall the definition of the {\em dual complex} of the strict normal crossings divisor $\cX_k$.
 We write $\cX_k=\sum_{i\in I}N_i E_i$ and for every non-empty subset $J$ of $I$, we set $E_J=\cap_{i\in J} E_i$.
  The dual complex of $\cX_k$ is a simplicial complex\footnote{To be precise, $|\Delta(\cX_k)|$ is not a simplicial complex in the strict sense, because we allow for instance multiple edges between two vertices. This has no importance for the present exposition (and can always be remediated by blowing up $\cX$ at connected components of the subvarieties $E_J$, which gives rise to a stellar subdivision of the corresponding face of $|\Delta(\cX_k)|$). In any case, $|\Delta(\cX_k)|$ is the topological realization of a finite simplicial set.} $|\Delta(\cX_k)|$ whose simplices of dimension $d$ correspond bijectively to
 the connected components of the regular $k$-varieties $E_J$ where $J$ runs through the set of subsets of $I$ of cardinality $d+1$. If $J$ and $J'$ are non-empty subsets of $I$,
 and $C$ and $C'$ are connected components of $E_J$ and $E_{J'}$, respectively, then the simplex corresponding to $C$ is a face of the simplex corresponding to $C'$ if and only if
 $C$ contains $C'$. Thus the vertices of $|\Delta(\cX_k)|$ correspond to the irreducible components $E_i$ of $\cX_k$,
   and we will denote the vertices accordingly by $\nu_i$, $i\in I$. If $i$ and $j$ are distinct elements of $I$ then the number of edges between $\nu_i$ and $\nu_j$ is the number of connected components of $E_i\cap E_j$, and so on.
 In this way, the dual complex $|\Delta(\cX_k)|$ encodes the combinatorial structure of the intersections of prime components in $\cX_k$. The dimension of $|\Delta(\cX_k)|$ is at most $n$, the dimension of $X$.
  If $X$ has dimension one, then the dual complex $|\Delta(\cX_k)|$ is more commonly known as the {\em dual graph} of the special fiber $\cX_k$.

  \begin{example}
  Assume that $X$ has dimension one and that $\cX_k$ has four irreducible components $E_1,E_2,E_3,E_4$ such that $E_1$ intersects each of the other components in precisely one point and there are no other intersection points. Then $|\Delta(\cX_k)|$ is a graph with four vertices $v_1,v_2,v_3,v_4$ with one edge between $v_1$ and $v_i$ for $i=2,3,4$ and no other edges.

 If $X$ has dimension two and $\cX_k$ is isomorphic to the union of the coordinate planes in $\mathbb{A}^3_k$, then $|\Delta(\cX_k)|$ is the standard 2-simplex.
  \end{example}

 \sss \label{sss-phi} We will now construct a map $$\Phi\colon |\Delta(\cX_k)|\to \Sk(\cX).$$ For each $i\in I$, our map $\Phi$ sends the vertex $\nu_i$ of $|\Delta(\cX_k)|$ to the divisorial point associated with $(\cX,E_i)$. In order to define $\Phi$ on the higher-dimensional faces
  of $|\Delta(\cX_k)|$, we use monomial valuations to interpolate between these divisorial valuations, as follows.
   Let $y$ be a point of  $|\Delta(\cX_k)|$. Then there exists a unique face $\tau$ of $|\Delta(\cX_k)|$ such that $y$ lies in the interior $\tau^o$ of $\tau$.
  By the construction of $|\Delta(\cX_k)|$, the face $\tau$ corresponds to a connected component $C$ of an intersection $E_J$ for some subset $J$ of $I$. We denote by $\xi$ the generic point of $C$. The vertices of $\tau$
  correspond precisely to the irreducible components $E_i$ with $i\in J$.  We can represent the point $y$ by a tuple
  of barycentric coordinates $\beta\in \R^J$ where each coordinate $\beta_i$ is a positive real number and their sum is equal to one. Now we define $\Phi(y)$ as the
  monomial point of $X^{\an}$ associated with the data
  $$(\cX,(E_i)_{i\in J},(\beta_i/N_i)_{i\in J},\xi).$$

  \sss It is easy to see that $\Phi$ is a bijection, since we can give the following description of the map $\Phi^{-1}$. Let $x$ be a point of $\Sk(\cX)$.
   Then $\spe_{\cX}(x)$ is a generic point $\xi$ of $E_J$, for some uniquely determined non-empty subset $J$ of $I$. For each $i\in J$ we choose a local equation $T_i=0$ for
   $E_i$ in $\cX$ at $\xi$, and we set $\alpha_i=v_x(T_i)$. Then $\Phi^{-1}(x)$ lies in the interior of the face $\tau$ of $|\Delta(\cX_k)|$ corresponding to the
   connected component of $\xi$ in $E_J$, and its tuple of barycentric coordinates  is equal to $(\alpha_iN_i)_{i\in J}$.
     In fact, we can say more.

   \begin{prop}\label{prop-homeo}
   The map
   $$\Phi\colon |\Delta(\cX_k)|\to \Sk(\cX)$$ is a homeomorphism.
   \end{prop}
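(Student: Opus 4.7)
The plan is to test continuity of $\Phi$ against the generators of the Berkovich topology, and then to upgrade the continuous bijection already constructed to a homeomorphism by a compactness argument on each closed simplex. Since bijectivity has been verified just before the statement via the explicit description of $\Phi^{-1}$, only continuity of $\Phi$ and of $\Phi^{-1}$ remain.

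By the description of the topology on $X^{\an}$ in \eqref{sss-defan}, $\Phi$ is continuous iff for every Zariski-open $U\subseteq X$ and every $f\in \mathcal{O}_X(U)$ the function $y\mapsto v_{\Phi(y)}(f)$ is continuous on its domain of definition. I would argue simplex by simplex: fix a closed face $\bar\tau$ of $|\Delta(\cX_k)|$ attached to a connected component $C\subseteq E_J$ with generic point $\xi$ and, for $y\in\tau^o$ with barycentric coordinates $\beta$ and $\alpha_i=\beta_i/N_i$, apply formula~\eqref{eq-monexpl} to an admissible expansion of $f$ in $\widehat{\mathcal{O}}_{\cX,\xi}$ to get
\[
v_{\Phi(y)}(f)=\min\{v_K(d_\gamma)+\alpha\cdot\gamma\,:\,c_\gamma d_\gamma\neq 0\}.
\]
On $\tau^o$ all $\alpha_i$ are strictly positive, so only finitely many $\gamma$ can compete for the minimum in a neighbourhood of any fixed $y_0$, and the minimum of finitely many affine functions is continuous. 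The only delicate issue is what happens as $y$ approaches a proper face $\tau'\subset\bar\tau$: some $\alpha_i$ tend to zero and the specialization jumps from $\xi$ to a larger generic point $\xi'$, so one must match the above expression with the monomial valuation attached to $\tau'$, which is computed inside a different completed local ring $\widehat{\mathcal{O}}_{\cX,\xi'}$. This boundary compatibility is the heart of the construction sketched in \eqref{sss-monexpl} and is made precise in \cite[2.4.6, 3.1.6]{MuNi}; I expect it to be the main technical obstacle.

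For the upgrade to a homeomorphism, each closed simplex $\bar\tau\subseteq|\Delta(\cX_k)|$ is compact and $\Sk(\cX)\subseteq X^{\an}$ is Hausdorff, so the continuous bijection $\Phi|_{\bar\tau}$ is automatically a homeomorphism onto the closed subset $\Phi(\bar\tau)\subseteq\Sk(\cX)$. If $\cX$ is proper, then $|\Delta(\cX_k)|$ is itself compact and there is nothing more to do. In general, $|\Delta(\cX_k)|$ carries the coherent (weak) topology with respect to its closed simplices, so it suffices to show that any subset $A\subseteq\Sk(\cX)$ whose intersection with every $\Phi(\bar\tau)$ is closed is itself closed; this is a purely local check at each point of $\Sk(\cX)$, which can be carried out inside the compact analytic domain $\widehat{\cX}_\eta$ after replacing $\cX$ by a quasi-compact open containing the relevant simplices. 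Combined with the simplexwise homeomorphisms, this yields continuity of $\Phi^{-1}$ and concludes the proof.
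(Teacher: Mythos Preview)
Your approach is essentially the same as the paper's: prove continuity of $\Phi$ simplex by simplex using the explicit formula \eqref{eq-monexpl} for monomial valuations, handle the passage to boundary faces via the compatibility observation (the paper states it as: if some $\alpha_i$ are set to zero in \eqref{eq-monexpl0}, the formula still computes the monomial valuation attached to the corresponding smaller stratum and larger generic point), and then invoke compactness of the source and Hausdorffness of the target to conclude. The one unnecessary detour is your final paragraph: since $\cX$ is of finite type over $R$, the special fiber $\cX_k$ has only finitely many irreducible components and each $E_J$ has only finitely many connected components, so $|\Delta(\cX_k)|$ is always a \emph{finite} simplicial complex and hence compact, regardless of whether $\cX$ is proper---the weak-topology argument is never needed.
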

   \begin{proof}
   Since the source of $\Phi$ is compact and the target is Hausdorff, we only need to prove that $\Phi$ is continuous.
    This is not difficult: using the definition of the Berkovich topology in \eqref{sss-defan} and the explicit description of monomial valuations in \eqref{sss-monexpl},
 one immediately checks that $\Phi$ is continuous on the interior of each of the faces of $|\Delta(\cX_k)|$. To get the continuity at the boundary faces,
  one uses the following easy observation. Suppose that some of the $\alpha_i$ are zero in the construction of the valuation $v$ in \eqref{sss-monexpl},
  say, $\alpha_1,\ldots,\alpha_s\neq 0$ and $\alpha_{s+1}=\ldots=\alpha_r=0$ for some $s<r$. Then the formula we gave in \eqref{eq-monexpl0} defines the monomial valuation
   associated with $\cX$, the components $E_1,\ldots,E_s$, the parameters $\alpha_1,\ldots,\alpha_s$ and the unique generic point of $E_1\cap\ldots\cap E_s$ whose closure
   contains $\xi$.  For details, see \cite[2.4.9 and 3.1.4]{MuNi}.
   \end{proof}

 \sss We can use the homeomorphism $\Phi$ to endow $\Sk(\cX)$ with a piecewise $\Z$-affine structure; see \cite[\S3.2]{MuNi}.
    This structure
  can be defined intrinsically on $X^{\an}$ and is independent of the choice of the model $\cX$.  The induced piecewise $\Q$-affine structure is simply the one inherited from the faces of
 $|\Delta(\cX)|$. We will not use the finer $\Z$-affine structure so we will not recall its definition here.  If $f$ is a non-zero rational function on $X$, then the function
$$\Sk(\cX)\to \R \colon x \mapsto \ln|f(x)|$$
 is continuous and piecewise affine.

 \sss Proposition \ref{prop-homeo} gives an explicit description of the topological space $\Sk(\cX)$. We will now explain how one can use this description
  to determine the homotopy type of $\widehat{\cX}_\eta$. First, we construct a retraction $$\rho_{\cX}\colon \widehat{\cX}_\eta\to \Sk(\cX)$$ for the
  embedding of $\Sk(\cX)$  in $\widehat{\cX}_\eta$. Let $x$ be a point of $\widehat{\cX}_\eta$.
   Let $J$ be the set of indices $i\in I$ such that $E_i$ contains  the center $\spe_{\cX}(x)$ of $x$ on $\cX$. We denote by $C$ the connected component of $x$ in  $E_J$  and by $\xi$ the generic point of $C$. For each $i\in J$ we choose a local equation $T_i=0$ for
   $E_i$ in $\cX$ at $\spe_{\cX}(x)$, and we set $\alpha_i=v_x(T_i)$. Then $\rho_{\cX}(x)$ is the monomial point in $X^{\an}$ associated with the data
   $$(\cX,(E_i)_{i\in J},(\alpha_i)_{i\in J},\xi).$$ In other words, it is the unique point of the skeleton such that the Zariski closure of its center contains the center of $x$
   and which gives the same valuation to each local defining equation of an irreducible component $E_i$ of $\cX_k$ passing through $\spe_{\cX}(x)$. It is an easy exercise to
   verify that $\rho_{\cX}$ is continuous. The most fundamental result about Berkovich skeleta is the following theorem.

   \begin{theorem}[Berkovich, Thuillier]\label{thm-sk}
   There exists a continuous map
   $$H:\widehat{\cX}_\eta \times [0,1]\to \widehat{\cX}_\eta$$ such that
   $H(\cdot,0)$ is the identity, $H(\cdot,1)$ is the map $\rho_{\cX}\colon \widehat{\cX}_\eta\to \Sk(\cX)$, and $H(x,t)=x$ for every point $x$ of $\Sk(\cX)$
    and every $t$ in $[0,1]$. Thus $\Sk(\cX)$ is a strong deformation retract of $\widehat{\cX}_\eta$.
   \end{theorem}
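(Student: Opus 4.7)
The plan is to follow Berkovich's construction in \cite{berk-vanish}: reduce to local toroidal charts adapted to the stratification of $\cX_k$, build the homotopy on each chart, and glue.

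First, I would cover $\widehat{\cX}_\eta$ by formal-analytic opens $V_\xi := \spe_{\cX}^{-1}(U_\xi \cap \cX_k)$, one for each generic point $\xi$ of each stratum $E_J = \bigcap_{i \in J} E_i$. Here $U_\xi$ is a Zariski-open neighborhood of $\xi$ in $\cX$ on which local equations $T_i$ ($i \in J$) for the components meeting $\xi$ are defined and no other component of $\cX_k$ appears. By regularity and Cohen's structure theorem, the $T_i$ extend (in the completed local ring, and \'etale-locally) to a regular system of parameters $T_1,\ldots,T_r,S_1,\ldots,S_{n-r}$ at $\xi$ with $\pi = u\prod_{i=1}^{r} T_i^{N_i}$ for some unit $u$. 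This identifies a formal neighborhood of $\xi$ in $\widehat{\cX}$ with a standard toroidal formal $R$-scheme.

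Next I would construct $H$ on each $V_\xi$ using this toroidal structure. The retraction $\rho_{\cX}$ sends $x \in V_\xi$ to the monomial valuation with parameters $\alpha_i = v_x(T_i)$, effectively ignoring the valuations $\beta_j = v_x(S_j) \geq 0$. The deformation retraction is obtained geometrically by linearly contracting the $S$-directions: informally, $H(x,t)$ is a valuation with the $\alpha_i$ unchanged and the contribution from the $S$-coordinates scaled by $(1-t)$. At $t=0$ this gives $v_x$, at $t=1$ it gives $v_{\rho_{\cX}(x)}$, and for $x \in \Sk(\cX)$ all $\beta_j$ already vanish, so $H(x,t)=x$ throughout. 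Making this rigorous amounts to applying the standard Berkovich deformation retraction of a closed polydisc onto its Gauss point, fiberwise over the face of $\Sk(\cX)$ corresponding to the stratum of $\xi$; continuity in $x$ and $t$ follows from the definition of the Berkovich topology (\ref{sss-defan}) and the explicit monomial formula (\ref{sss-monexpl}).

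Finally I would glue. On $V_\xi \cap V_{\xi'}$ for generic points in the same stratum, independence of the choice of local parameters and of $\xi$ follows from the intrinsic nature of $\rho_{\cX}$ (cf.\ Proposition \ref{prop-homeo}). For generic points in adjacent strata, compatibility reduces to the assertion that the toroidal contractions on lower- and higher-dimensional charts agree on their common boundaries: as a coordinate $\alpha_i \to 0$ (i.e.\ $T_i$ becomes a unit), the homotopy on the lower chart degenerates correctly to the one on the higher-dimensional face. The main obstacle is making this boundary compatibility precise across all strata simultaneously; this is the technical heart of the argument in \cite{berk-vanish} and uses detailed analysis of the generic fiber of a formal $R$-scheme and its Berkovich analytic structure, in particular the identification of the fibers of $\rho_{\cX}$ with compatible polydiscs over overlapping charts.
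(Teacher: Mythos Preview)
The paper does not prove Theorem~\ref{thm-sk}; it explicitly says that a full proof is beyond the scope of this survey, refers to Berkovich \cite{berk-contr} and Thuillier \cite{thuillier}, and only works out the basic example $\cX=\Spec R[T_1,T_2]/(T_1^{N_1}T_2^{N_2}-\pi)$ in Section~\ref{subsec-exam}. So there is no full argument to compare against, but the example already highlights a genuine gap in your local construction.

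Your local homotopy is described as ``linearly contracting the $S$-directions'' with the $\alpha_i=v_x(T_i)$ unchanged, and you reduce to the retraction of a polydisc onto its Gauss point. This is not what the retraction does. In the paper's example there are \emph{no} $S$-coordinates at all (the deepest stratum is a point, $r=n+1$), yet the retraction $\rho_{\cX}$ is nontrivial: a general point $x$ with $v_x(T_i)=\alpha_i$ is not the monomial point with those parameters, and the homotopy has to move it there. The fibers of $\rho_{\cX}$ are also not polydiscs when the multiplicities $N_i$ exceed $1$; the paper explicitly notes that an $sncd$-model is not poly-stable unless $\cX_k$ is reduced, so Berkovich's poly-stable results from \cite{berk-contr} (not \cite{berk-vanish}, which is a different paper) do not apply directly. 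What the paper's example actually uses is a \emph{torus action}: one lets the torus $\{U_1^{M_1}U_2^{M_2}=1,\ |U_1|=1\}$ act on $\widehat{\cX}_\eta$ and transports $x$ along a path $\gamma(t)$ in the torus from the identity to a Gauss-type point. This torus-action viewpoint is the core of the Berkovich--Thuillier toroidal method and is what makes both the local construction and the gluing across strata work; your sketch does not mention it, and without it the description of $H$ on a chart is not well-defined.
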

   \begin{cor}\label{cor-sk}
   If $X$ is proper and $\cX$ is a proper $sncd$-model of $X$, then $\Sk(\cX)$ is a strong deformation retract of $X^{\an}$. In particular, $X^{\an}$ has the same homotopy type
   as the simplicial complex $|\Delta(\cX_k)|$.
   \end{cor}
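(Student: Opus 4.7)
The plan is to derive this corollary directly from Theorem \ref{thm-sk} by checking that, under the properness assumption, the analytic domain $\widehat{\cX}_\eta$ coincides with all of $X^{\an}$. This reduction was already noted in the discussion of the reduction map: if $\cX$ is proper over $R$, the valuative criterion of properness guarantees that every point $x\in X^{\an}$ has a center on $\cX$, since the map $\Spec \cH(x)\to X$ extends uniquely to $\Spec \cH(x)^o\to \cX$. Thus $\widehat{\cX}_\eta = X^{\an}$.

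With this identification in hand, the homotopy $H\colon \widehat{\cX}_\eta\times [0,1]\to \widehat{\cX}_\eta$ supplied by Theorem \ref{thm-sk} becomes a homotopy $X^{\an}\times [0,1]\to X^{\an}$ with the required properties: $H(\cdot,0)$ is the identity on $X^{\an}$, $H(\cdot,1)=\rho_{\cX}$ retracts onto $\Sk(\cX)$, and $H$ is stationary on $\Sk(\cX)$. This is exactly the definition of $\Sk(\cX)$ being a strong deformation retract of $X^{\an}$.

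For the second assertion, I would invoke Proposition \ref{prop-homeo}, which provides a homeomorphism $\Phi\colon |\Delta(\cX_k)|\to \Sk(\cX)$. Composing, $X^{\an}$ is homotopy equivalent to $\Sk(\cX)$, which is homeomorphic (hence homotopy equivalent) to $|\Delta(\cX_k)|$.

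There is essentially no obstacle here: both ingredients (the valuative criterion giving $\widehat{\cX}_\eta=X^{\an}$, and Theorem \ref{thm-sk}) do all the work. The only thing to be careful about is to record explicitly that the properness of $\cX$ over $R$, not just the properness of $X$ over $K$, is what is needed to identify $\widehat{\cX}_\eta$ with $X^{\an}$.
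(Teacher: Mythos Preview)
Your proof is correct and follows exactly the same approach as the paper: the paper's proof consists of a single sentence invoking $\widehat{\cX}_\eta=X^{\an}$ for proper $\cX$ together with Proposition \ref{prop-homeo}, which is precisely what you spell out in more detail. Your remark that it is properness of $\cX$ over $R$ (rather than of $X$ over $K$) that yields $\widehat{\cX}_\eta=X^{\an}$ is a useful clarification, consistent with the paper's earlier discussion of the specialization map.
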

   \begin{proof} This follows from the fact that $\widehat{\cX}_\eta=X^{\an}$ if $\cX$ is proper, and from Proposition \ref{prop-homeo}.
   \end{proof}

   \sss  Giving a proof of Theorem \ref{thm-sk} goes beyond the scope of this survey, but we will work out an elementary example in Section
   \ref{subsec-exam}. The origins of Theorem \ref{thm-sk} are the results by Berkovich on skeleta of so-called poly-stable formal schemes \cite{berk-contr}.
   Berkovich used these skeleta to prove that smooth non-archimedean analytic spaces are locally contractible. An $sncd$-model $\cX$ (or rather, its formal $\frak{m}$-adic completion) is not poly-stable unless the special fiber $\cX_k$ is reduced\footnote{However, the class of poly-stable formal schemes is much larger than the class of $sncd$-models with reduced special fiber.}. Thus we cannot directly apply Berkovich's result here. If $R$ has equal characteristic and $\cX$ is defined over an algebraic curve,  we explained in \cite[3.1.3]{NiXu} how one can deduce  Theorem \ref{thm-sk} from results by Thuillier on skeleta over trivially valued fields \cite{thuillier}. The general case can be proven by translating Thuillier's toroidal methods into the language of log-geometry; details will be given in a forthcoming publication. 

   \sss If $X$ is proper over $K$, then the existence of a proper $sncd$-model $\cX$ is known if $k$ has characteristic zero (by Hironaka's resolution of singularities), and also if $k$ has arbitrary characteristic and $X$ is a curve (by Lipman's resolution of singularities for excellent schemes of dimension two). Most experts believe that it should exist in general, but at this moment, resolution of singularities in positive and mixed characteristic remains one of the big open problems in algebraic geometry.
      Corollary \ref{cor-sk} implies in particular that the homotopy type of the dual complex $|\Delta(\cX_k)|$ does not depend on the choice of the proper $sncd$-model  $\cX_k$.
    This is an analog of Thuillier's generalization of Stepanov's theorem in \cite{thuillier}, saying that the homotopy type of the dual complex of a log resolution of a pair of algebraic varieties over a perfect field is independent of the choice of the log resolution.

\sss \label{sss-defret} If $\overline{X}$ is a smooth compactification of $X$ and $\cX$ is a proper $sncd$-model of $\overline{X}$, then the explicit construction of the strong deformation retract $H$ from Theorem \ref{thm-sk} shows that it restricts to strong deformation retracts of $X^{\an}$ and $X^{\bir}$ onto $\Sk(\cX)$ (in fact, $H(x,t)$ lies in $X^{\bir}$ for every $x$ in $\overline{X}^{\an}$ and every $t>0$). Thus the inclusions $X^{\bir}\to X^{\an}$ and $X^{\an}\to \overline{X}^{\an}$ are homotopy equivalences. In particular, the homotopy type of the analytification of a smooth $K$-variety is a birational invariant if $k$ has characteristic zero.

 \subsection{The deformation retraction in a basic example.}\label{subsec-exam}
\sss We will give an explicit construction of the map $H$ from Theorem \ref{thm-sk} for the following elementary example:
 $$\cX=\Spec R[T_1,T_2]/(T_1^{N_1}T_2^{N_2}-\pi),$$ with $\pi$ a uniformizer in $R$ and $N_1$, $N_2$ positive integers. Then
 $\cX$ is an $sncd$-model for its generic fiber $X=\cX_K$, and
 $\widehat{\cX}_\eta$ is the set of points $x$ in $X^{\an}$ such that  $|T_1(x)|\leq 1$ and $|T_2(x)| \leq 1$.
 We denote by $E_i$ the component of $\cX_k$ defined by $T_i=0$ for $i=1,2$ and by $O$ the unique intersection point of $E_1$ and $E_2$.
 The dual complex $|\Delta(\cX_k)|$ is the standard $1$-simplex
  $$\Delta_1=\{(\lambda,1-\lambda)\in \R^2\,|\,0\leq \lambda \leq 1\}$$ and the morphism $\Phi$ constructed in \eqref{sss-phi}
  sends $(1,0)$ to the divisorial point associated with $(\cX,E_1)$, $(0,1)$ to the divisorial point associated with $(\cX,E_2)$, and
  $(\lambda,1-\lambda)$ to the monomial point associated with
  $$(\cX,(E_1,E_2),(\frac{\lambda}{N_1},\frac{1-\lambda}{N_2}), O)$$
  for all $\lambda\in \,]0,1[$.

\sss   The construction of the map $H$ is best understood in terms of torus actions.
 We set $c=\gcd(N_1,N_2)$ and $M_i=N_i/c$ for $i=1,2$, and we choose integers $a_1$ and $a_2$ such that $a_1M_1+a_2M_2=1$.
  For every complete valued field extension $(L,|\cdot|_L)$ of $K$ we set
 $$\G_L=\{x\in (\Spec L[U_1,U_2]/(U^{M_1}_1U^{M_2}_2-1))^{\an}\,|\ |U_1(x)|=1\}$$
  with the group structure given by componentwise multiplication. For every
  element $t$ in the interval $[0,1]$ we define a point $\gamma_L(t)$ in $\G_L$ as follows.
   We will make use of the isomorphism of $L$-algebras
   $$L[V,V^{-1}]\to L[U_1,U_2]/(U_1^{M_1}U_2^{M_2}-1)\colon V\mapsto U_1^{a_2}U_2^{-a_1}$$
    whose inverse is given by $U_1\mapsto V^{M_2}$ and $U_2\mapsto V^{-M_1}$.
  We can write every polynomial $f$ in $L[V]$ as a Taylor expansion
  $$f=\sum_{i\geq 0}c_{i}(V-1)^i$$
  around the point $1$, where the coefficients $c_{i}$ lie in $L$. Then the point $\gamma_L(t)$ is fully determined by the property that
  $$|f(\gamma_L(t))|=\max_{i\geq 0}|c_{i}|_L t^{i}.$$
  In other words, the point $\gamma_L(t)$ is the sup-norm on the closed disc of radius $t$ around the point $1$ in a completed algebraic closure $\widehat{L^a}$ of $L$. Note that
  $$|U_1(\gamma_L(t))|=|V^{M_2}(\gamma_L(t))|=1$$
   for every $t$ so that $\gamma_L(t)$ is indeed a point of $\G_L$. The map
  $$\gamma_L:[0,1]\to \G_L:t\mapsto \gamma(t)$$
  is a continuous path from $\gamma_L(0)=1$ to $\gamma_L(1)$.

\sss The torus $\G_K$ acts on $\widehat{\cX}_\eta$ by componentwise multiplication, and we can use this action together with the paths $\gamma_L$ to produce paths in $\widehat{\cX}_\eta$. For every point $x$ of $\widehat{\cX}_\eta$, the  action of $\G_K$ gives rise to a continuous map
 $$\G_{\cH(x)}\to (\widehat{\cX}_\eta)\times_K \cH(x):g\mapsto g\cdot x.$$
  For every $t$ in $[0,1]$, we define $H(x,t)$ as the image of $\gamma_{\cH(x)}(t)\cdot x$ under the projection map
  $$(\widehat{\cX}_\eta)\times_K \cH(x)\to \widehat{\cX}_\eta.$$
  In this way, we obtain a map
  $$H\colon \widehat{\cX}_\eta\times [0,1]\to \widehat{\cX}_\eta\colon (x,t)\mapsto H(x,t).$$
 The map $H$ is continuous by continuity of the paths $\gamma_L$ and of the torus action on $\widehat{\cX}_\eta$.

 \sss We can also give a more explicit and down-to-earth (but less conceptual) description of the map $H$. Let $x$
 be a point of $\widehat{\cX}_\eta$ and let $t$ be an element of $[0,1]$. For notational convenience, we set $x_1=T_1(x)$ and $x_2=T_2(x)$; these are elements of the residue field
 $\cH(x)$ of $X^{\an}$ at $x$. Let $f$ be an element of $K[T_1,T_2]$. Then we can write the Laurent polynomial $f(x_1V^{M_2},x_2V^{-M_1})$ in $\cH(x)[V,V^{-1}]$ as a  rational function
$$f(x_1V^{M_2},x_2V^{-M_1})=\frac{1}{V^j}\sum_{i\geq 0}c_{i}(V-1)^i,$$
where the coefficients $c_i$ belong to the valued field $\cH(x)$ and only finitely many of them are non-zero. The point $H(x,t)$ is fully characterized by the property
$$|f(H(x,t))|=\max_{i}|c_{i}|t^{i}.$$ We remark for later reference that
\begin{equation}\label{eq-bound}
 |f(H(x,t))| \geq |c_{0}| = |f(x)|.
\end{equation}

\sss Now we prove that $H$ is a strong deformation retract onto $\Sk(\cX)$.  Setting $t=0$ we find $|f(H(x,0))|=|f(x)|$ so that $H(x,0)=x$. To compute $H(x,1)$ we use the fact that for every  complete valued extension $(L,|\cdot|_L)$ of $K$, the closed disc with radius one around $1$ in $\widehat{L^a}$ coincides with the closed disc with radius one around $0$, so that
$$|g(\gamma_L(1))|=\max_{i} |c_i|_L $$
for every polynomial $g=\sum_{i\geq 0}c_iV^i$ in $L[V]$. In this way, we see that for every polynomial
$$f=\sum_{i,j\geq 0}c_{ij}T_1^iT_2^j$$ in $K[T_1,T_2]$, we have
$$|f(H(x,1))|=\max_{i,j}|c_{ij}|_K |x_1|^{i}|x_2|^{j},$$ or, in additive notation:
$$v_{H(x,1)}(f)=\min_{i,j}\{v_K(c_{ij})+iv_x(T_1)+jv_x(T_2)\}.$$
 Thus
  we find by using formula \eqref{eq-monexpl} that $H(x,1)$ is the monomial point on $X^{\an}$ associated with
 $$(\cX,(E_1,E_2),(v_x(T_1),v_x(T_2)) ,O).$$ This is precisely the image $\rho_{\cX}(x)$ of $x$ under the retraction $\rho_{\cX}\colon \widehat{\cX}_\eta\to \Sk(\cX)$.
 Finally, we show that $H(x,t)=x$ for all $t$ in $[0,1]$ when $x$ is a point of the skeleton $\Sk(\cX)$, that is, a monomial point with respect to $\cX$.
  Direct computation shows that $|T_1(H(x,t))|=|x_1|$ and $|T_2(H(x,t))|=|x_2|$. Combining the inequality \eqref{eq-bound} with the minimality property
  of monomial valuations in Proposition \ref{prop-mon}, we see at once that $H(x,t)$ must be equal to $x$.

\newpage 
\section{Weight functions and the Kontsevich-Soibelman skeleton}

\subsection{The work of Kontsevich and Soibelman}
\sss In \cite{KS}, Kontsevich and Soibelman proposed a new
interpretation of mirror symmetry based on
non-archimedean geometry over the field of complex Laurent series
$\C((t))$. Their fundamental idea was to encode a part of the geometry of a
one-parameter degeneration of complex Calabi-Yau varieties into a
  topological manifold endowed with a
$\Z$-affine structure with singularities, and to interpret mirror
symmetry as a certain combinatorial duality between such
manifolds. They worked out in detail the case of degenerations of
$K3$-surfaces. Similar ideas were developed by Gross and
Siebert in their theory of {\em toric degenerations}. Gross and
Siebert replaced the use of non-archimedean geometry by methods
from tropical and logarithmic geometry and extended the
results for $K3$ surfaces to higher-dimensional
degenerations \cite{gross}.

\sss An essential ingredient of the construction of Kontsevich and
Soibelman is the following. We denote by $\Delta$ a small disc
around the origin of the complex plane and we set
$\Delta^*=\Delta\setminus \{0\}$. We denote by $t$ a local
coordinate on $\Delta$ centered at $0$. Let $X$ be a smooth
projective family of varieties over $\Delta^*$ and let $\omega$ be
a relative differential form of maximal degree on the family $X\to
\Delta^*$.
 Kontsevich and Soibelman associated to these data a skeleton
 $\Sk(X,\omega)$, which is a topological subspace of the Berkovich
 analytfication of the $\C((t))$-variety obtained from $X$ by base
 change. If $X$ is a family of Calabi-Yau varieties, then we set
 $\Sk(X,\omega)=\Sk(X)$ where $\omega$ is any relative volume form
 on $X$. This definition does not depend on the choice of $\omega$.

\sss Kontsevich and Soibelman proved that $\Sk(X,\omega)$ can
 be explicitly  computed on any strict normal crossings model $\cX$ for
 $X$ over $\Delta$: it is a union of faces of the Berkovich skeleton $\Sk(\cX)$ of the model $\cX$ on which $\omega$ is minimal in a suitable sense. Their proof relied on the Weak Factorization
 Theorem. It is interesting to note that, even though the
 Berkovich skeleton $\Sk(\cX)$ from Section \ref{subsec-berksk} heavily depends on the chosen model $\cX$, the
 Kontsevich-Soibelman skeleton $\Sk(X,\omega)$ only depends on $X$
 and $\omega$. It singles out certain faces of $\Sk(\cX)$
 that must appear in the skeleton of {\em every} strict normal
 crossings model.

 \sss In \cite{MuNi}, Mircea Musta\c{t}\u{a} and the author extended this construction to
 varieties over complete discretely valued fields $K$ of arbitrary
 characteristic, and to pluricanonical forms $\omega$. Our approach does not use the Weak Factorization
 Theorem but only relies on basic computations on valuations and canonical
 sheaves. Moreover, we proved that the skeleton of a Calabi-Yau
 variety over $\C((t))$ is always connected. An interesting gadget
 that appears in our work is the {\em weight function}
 $$\weight_{\omega}:X^{\an}\to \R\cup\{+\infty\}$$ associated to a smooth and proper $K$-variety $X$ and a pluricanonical form $\omega$ on $X$.
  This weight function is
  piecewise affine on the Berkovich skeleton of any
 strict normal crossings model of $X$ and strictly increasing as one
 moves away from the Berkovich skeleton. The Kontsevich-Soibelman
 skeleton is precisely the set of points where $\weight_{\omega}$
 reaches its minimal value; see Section \ref{sec-KSdef}.

\subsection{Log discrepancies in birational geometry}

\sss Our approach is inspired by interesting analogies with some
fundamental invariants in birational geometry. Let $X$ be a smooth
complex variety and $\mathcal{I}$ a coherent ideal sheaf on $X$.
Let $v$ be a divisorial valuation on $X$, that is, a positive real
multiple of the discrete valuation $\mathrm{ord}_E$ on the function field $\C(X)$
associated to a prime divisor $E$ on a normal birational
modification $Y$ of $X$. We denote by $N$ the multiplicity of the
scheme $Z(\mathcal{I}\mathcal{O}_Y)$ along $E$  and by $\nu$ the
multiplicity of $E$ in the relative canonical divisor $K_{Y/X}$.
We set
$$\weight_{\mathcal{I}}(v)=\frac{\nu+1}{N}$$ and we call this positive rational
number the weight of $\mathcal{I}$ at $v$. Then the infimum of the
 values $\weight_{\mathcal{I}}(v)$ at all divisorial valuations $v$
on $X$ is called the log-canonical threshold of the pair
$(X,\mathcal{I})$ and denoted by $\lct(X,\mathcal{I})$. This is
 a measure for the singularities of the zero locus $Z(\mathcal{I})$ of $\mathcal{I}$ on $X$, and one of the most important
 invariants in birational geometry. We refer to \cite{kol} for
 more background.

 \sss It is a fundamental fact that the log-canonical threshold
 of $(X,\mathcal{I})$ can be computed on a single log-resolution
  of $(X,\mathcal{I})$, i.e., a proper birational morphism $h:Y\to
 X$ such that $Y$ is smooth, $h$ is an isomorphism over the complement of
 $Z(\mathcal{I})$, and
 $Z(\mathcal{I}\mathcal{O}_Y)$ is a strict normal crossings divisor
 on $Y$. Namely, we have
 $$\lct(X,\mathcal{I})=\min\{\weight_{\mathcal{I}}(v)\}$$ where
 $v$ runs over the divisorial valuations associated to the prime
 components of $Z(\mathcal{I}\mathcal{O}_Y)$. If this minimum is
 reached on a prime component $E$ of
 $Z(\mathcal{I}\mathcal{O}_Y)$, then we say that $E$ computes the
 log-canonical threshold of $(X,\mathcal{I})$. If we denote by
 $\mathcal{E}$ the union of such prime components $E$, then
 the Connectedness Theorem of Shokurov and Koll\'ar \cite[17.4]{kol} states that
 for every point $x$ of $Z(\mathcal{I})$ and every sufficiently
 small open neighbourhood $U$ of $x$ in $Z(\mathcal{I})$, the
  topological space $h^{-1}(U)\cap \mathcal{E}$ is connected. This
  was the main source of inspiration for our theorem on the
  connectedness of the skeleton of a Calabi-Yau variety over
  $\C((t))$ (Theorem \ref{thm-conn}).

\sss In \cite{BFJ} and \cite{JM}, a function closely related to the
weight function $\weight_{\mathcal{I}}$ was extended from the set
of divisorial valuations on $X$ to the non-archimedean link of
$Z(\mathcal{I})$ in $X$, that is, the analytic space over the
field $\C$ with the trivial absolute value that we obtain by
removing the generic fiber of the $\C$-variety $Z(\mathcal{I})$
from the generic fiber of the formal completion of $X$ along
$Z(\mathcal{I})$. We have made a similar construction to define
weight functions on analytic spaces over discretely valued fields;
 this construction will be explained in Section \ref{sec-weight}.

\subsection{Definition of the Kontsevich-Soibelman skeleton}\label{sec-KSdef}
\sss \label{sss-divweight} Let $X$ be a connected, smooth and proper $K$-variety of dimension $n$, and let $\omega$ be a non-zero $m$-pluricanonical form on $X$, that is, a non-zero element of $\omega_{X/K}^{\otimes m}(X)$.
 Let $\cX$ be a regular $R$-model of $X$, $E$ an irreducible
component of $\cX_k$ and $x$ the divisorial point on $X^{\an}$
associated with $(\cX,E)$. The relative canonical sheaf
$\omega_{\cX/R}$ is a line bundle on $\cX$ that extends the
canonical line bundle $\omega_{X/K}$ on $X$. The differential form
$\omega$ on $X$ defines a rational section of $\omega^{\otimes m}_{\cX/R}$ and
thus a divisor $\mathrm{div}_{\cX}(\omega)$ on $\cX$. We denote by
$N$ the multiplicity of $E$ in $\cX_k$ and by $\nu$ the
multiplicity of $E$ in $\mathrm{div}_{\cX}(\omega)$. We define the
weight of $\omega$ at $x$ by the formula
$$\weight_{\omega}(x)=(\nu+m)/N.$$ This definition only depends on $x$ and not on the choice of $\cX$ and $E$. We define the weight of $X$ with respect
to $\omega$ by
$$\weight_{\omega}(X)=\inf\{\weight_{\omega}(x)\,|\,x\in
X^{\divi}\}\in \R\cup \{-\infty\}.$$

\sss A divisorial point $x$ on $X^{\an}$ is called
{\em $\omega$-essential} if the weight function $\weight_{\omega}$
reaches its minimal value at $x$, that is,
$$\weight_{\omega}(X)=\weight_{\omega}(x).$$ The skeleton $\Sk(X,\omega)$ of
 the pair $(X,\omega)$ is defined as the closure of the
set of $\omega$-essential divisorial points in the space of birational points $X^{\bir}$.
%If  the canonical sheaf $\omega_{X/K}$ is trivial, then we set $\Sk(X)=\Sk(X,\omega)$
%where $\omega$ is any volume form on
%$X$. This definition is independent of the choice of $\omega$,
%since multiplying $\omega$ by an element $\lambda$ of $K^*$ shifts
%the weight function by $v_K(\lambda)$. Note also that $\Sk(X)=\Sk(\omega')$ for any non-zero pluricanonical form on $X$, because the
% space of $m$-pluricanonical forms on $X$ is generated by the $m$-th tensor power of $\omega$.
 It is obvious from the definition that $\Sk(X,\omega)$ is a
birational invariant of the pair $(X,\omega)$, since the spaces
$X^{\bir}$ and $X^{\divi}$ and the weight function
$\weight_{\omega}$ are birational invariants.

%\section{The weight function and the Connectedness Theorem}
\subsection{Definition and properties of the weight function}\label{sec-weight}
\sss Without suitable assumptions on the existence of
resolutions of singularities, we cannot say much more about the
skeleton $\Sk(X,\omega)$; for instance, we cannot prove that
$\Sk(X,\omega)$ is non-empty. Therefore, we will assume from now
on that $k$ has characteristic zero or $X$ is a curve. With the
current state of affairs, these are the cases where resolution of
singularities is known in the form that we need. In particular, it
is known that every proper $R$-model of $X$ can be dominated by a proper
$sncd$-model of $X$.
% that is, a regular proper $R$-model $\cX$
%whose special fiber $\cX_k$ is a divisor with strict normal
%crossings.

\sss In Section \ref{subsec-berksk} we have attached to each $sncd$-model $\cX$ of $X$ its
Berkovich skeleton $\Sk(\cX)$. It was defined as the set of all
birational points on $X$ that are
 monomial with respect to the strict normal crossings divisor
$\cX_k$ on $\cX$. We have shown in Proposition \ref{prop-homeo}  that the skeleton $\Sk(\cX)$ is canonically homeomorphic to the
dual complex $|\Delta(\cX_k)|$ of the strict normal crossings divisor $\cX_k$.

\begin{theorem}[Proposition 4.4.5 in \cite{MuNi}]\label{thm-wt}
There exists a unique smallest function
$$\weight_{\omega}:X^{\an}\to \R\cup \{+\infty\}$$ with the
following properties.
\begin{enumerate}
\item The function $\weight_{\omega}$ is lower semi-continuous.

\item \label{it:wt2} Let $\cX$ be an $sncd$-model for $X$ and let $x$ be a point of the
Berkovich skeleton $\Sk(\cX)$. Let $f$ be a rational function on
$\cX$ such that, locally at $\spe_{\cX}(x)$, we have
$$\mathrm{div}(f)=\mathrm{div}_{\cX}(\omega)+m(\cX_k)_{\red}.$$
Then $$\weight_{\omega}(x)=-\ln |f(x)|.$$ In particular,
$\weight_{\omega}$ is continuous and piecewise affine on
$\Sk(\cX)$, and we get the same value as in \eqref{sss-divweight} on divisorial
points. Moreover, for all $x$ in $\widehat{\cX}_\eta$, we have
$$\weight_{\omega}(x)\geq \weight_{\omega}(\rho_{\cX}(x))$$ with
equality if and only if $x\in \Sk(\cX)$.

\item The restriction of $\weight_{\omega}$ to $X^{\bir}$
is a birational invariant of $(X,\omega)$.
\end{enumerate}
\end{theorem}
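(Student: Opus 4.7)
The plan is to construct $\weight_\omega$ explicitly using clause (2), extend to $X^{\an}$ via a supremum over proper sncd-models, verify the three properties, and deduce minimality directly from (2).

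On $\Sk(\cX)$ for each proper sncd-model $\cX$, I use the prescription of clause (2): for $x \in \Sk(\cX)$ with $\spe_\cX(x) = \xi$, choose local parameters $T_1, \ldots, T_n$ at $\xi$ cutting out the components $E_i$ ($i \in J$) through $\xi$, and write $\omega = u \prod_{i \in J} T_i^{a_i}(dT_1 \wedge \cdots \wedge dT_n)^{\otimes m}$ with $u$ a unit and $a_i = \mathrm{mult}_{E_i}\divi_\cX(\omega)$. Then $f = u\prod_{i \in J} T_i^{a_i+m}$ is a local equation of $\divi_\cX(\omega) + m(\cX_k)_{\red}$, and formula \eqref{eq-monexpl0} gives
\[
-\ln|f(x)| = \sum_{i \in J} \alpha_i(a_i + m).
\]
This is continuous and piecewise affine in the barycentric coordinates on $\Sk(\cX)$ and specializes at divisorial vertices to the value $(\nu+m)/N$ of \eqref{sss-divweight}.

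The technical heart is checking that this value is intrinsic, i.e.\ independent of $\cX$ whenever $x$ lies in several skeleta. By resolution of singularities, any two proper sncd-models are dominated by a third, so it suffices to compare $\cX$ with a smooth blowup $\pi \colon \cZ \to \cX$ of a stratum of $\cX_k$. Pulling back $\omega$ introduces $m K_{\cZ/\cX}$ into $\divi_\cZ(\omega)$, while $(\cZ_k)_{\red}$ differs from $\pi^*(\cX_k)_{\red}$ along the exceptional divisor $E'$ by $(1-|J|)E'$. The key computation is that $m K_{\cZ/\cX}$ and $m((\cZ_k)_{\red} - \pi^*(\cX_k)_{\red})$ have opposite multiplicities along $E'$, so $f_\cZ$ and $\pi^* f_\cX$ differ by a unit along the exceptional locus, yielding $v_x(f_\cZ) = v_x(f_\cX)$.

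I then set $\weight_\omega(x) := \sup_\cX \weight_\omega(\rho_\cX(x))$, with $\cX$ running over proper sncd-models. Each $x \mapsto \weight_\omega(\rho_\cX(x))$ is continuous (since $\rho_\cX$ is continuous and $\weight_\omega$ is piecewise affine on $\Sk(\cX)$), so the pointwise supremum is lower semi-continuous, giving (1). For $x \in \Sk(\cX)$, the $\cX$-term of the supremum equals the value from Step 1, and the consistency above shows no refinement $\cZ$ gives a larger contribution, so the supremum is attained by $\cX$; this yields the equality part of (2). The bound $\weight_\omega(x) \geq \weight_\omega(\rho_\cX(x))$ is immediate from the definition, and the strict inequality for $x \notin \Sk(\cX)$ follows by exhibiting a blowup $\cY \to \cX$ centered through $\spe_\cX(x)$ such that $\rho_\cY(x) \neq \rho_\cX(x)$ and the weight strictly increases. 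Birational invariance (3) is immediate because every ingredient depends only on $(X, \omega)$ up to birational equivalence. Finally, any $F$ satisfying (1) and (2) agrees with $\weight_\omega$ on every skeleton and satisfies $F(x) \geq F(\rho_\cX(x)) = \weight_\omega(\rho_\cX(x))$ for each $\cX$, whence $F \geq \weight_\omega$, proving minimality and uniqueness. The main obstacle is the discrepancy computation in Step~2: it requires a careful matching between $m K_{\cZ/\cX}$ and the correction $m(\cX_k)_{\red}$, which is precisely why this correction appears in the definition.
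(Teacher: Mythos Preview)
Your overall strategy matches the paper's sketch: define $\weight_\omega$ on each $\Sk(\cX)$ by the formula in clause~(2), check consistency across models, extend to $X^{\an}$ as $\sup_\cX \weight_\omega(\rho_\cX(x))$, and verify the three properties. The discrepancy computation you give for a single stratum blowup is correct and explains nicely why the correction term $m(\cX_k)_{\red}$ is exactly what is needed.

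The gap is in the reduction to that computation. Domination by a common sncd-model reduces consistency to comparing $\cX$ with a dominating sncd-model $\cZ$, but a proper birational morphism $\cZ\to\cX$ between sncd-models does \emph{not} in general factor as a sequence of blowups along strata of $\cX_k$; that would be a strong-factorization statement, and even Weak Factorization only produces a zig-zag of blowups and blowdowns through intermediate models that need not remain sncd. The paper explicitly notes that the argument in \cite{MuNi} avoids Weak Factorization altogether and establishes consistency by a direct computation with monomial valuations and the canonical sheaf, so your blowup route, as written, assumes more than is available. A related loose end: to conclude that the supremum is attained at $\cX$ when $x\in\Sk(\cX)$ you must bound $\weight_\omega(\rho_\cZ(x))$ for \emph{every} proper sncd-model $\cZ$, including those not dominating $\cX$ and those with $x\notin\Sk(\cZ)$. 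This is precisely the step the paper isolates as ``one proves that the inequality in (2) holds for monomial points,'' and it needs its own argument beyond the consistency check you sketch.
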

\begin{proof}
We only give a rough sketch of the arguments and refer to \cite{MuNi} for details. The formula in \eqref{it:wt2} can be used to extend the weight function
$\weight_{\omega}$ to the set $X^{\mon}$ of monomial points on $X$. Of course, each monomial point will belong to the Berkovich skeleta of several $sncd$-models, and one must show that  the formula does not depend on the choice of an $sncd$-model. Next, one proves that the inequality in \eqref{it:wt2} holds for monomial points.
 When $x$ is any point of $X^{\an}$, one sets
$$\weight_{\omega}(x)=\sup_{\cX} \{\weight_{\omega}(\rho_{\cX})\}$$ where $\cX$ runs through the set of proper $sncd$-models of $X$. Then one can prove that the resulting function
$\weight_{\omega}$ on $X^{\an}$ satisfies all the properties in the statement.
\end{proof}

\subsection{Computation of the Kontsevich-Soibelman skeleton}
\sss \label{sss-propweight} We can use the properties of the weight function in Theorem
\ref{thm-wt} to compute the Kontsevich-Soibelman skeleton
$\Sk(X,\omega)$ on a fixed proper $sncd$-model $\cX$ of $X$.
 The divisorial points are dense in each face of $\Sk(\cX)$ (they are precisely the points with barycentric coordinates in $\Q$).
 Point \eqref{it:wt2} of the theorem
 immediately implies that $\Sk(X,\omega)$ is the subspace of the
compact space $\Sk(\cX)$ consisting of the points where the
continuous function $\weight_{\omega}|_{\Sk(\cX)}$ reaches its
minimal value, because it says that the weight function is strictly increasing if we move away from the skeleton (recall that $\widehat{\cX}_\eta=X^{\an}$ if
 $\cX$ is proper over $R$).
 In particular, $\Sk(X,\omega)$ is a non-empty
compact topological space. We can make this description much more
explicit, as follows.

\sss We write $\cX_k=\sum_{i\in I}N_i E_i$. For each $i\in I$, we
denote by $\nu_i$ the multiplicity of $E_i$ in the divisor
$\mathrm{div}_{\cX}(\omega)$. Recall that each face of $\Sk(\cX)$
corresponds to a connected component $C$ of an intersection
 $E_J=\cap_{j\in J}E_j$ where $J$ is a non-empty subset of $I$. We
 say that the face is $\omega$-essential if
 $$\frac{\nu_j+m}{N_j}=\min\left\{\frac{\nu_i+m}{N_i}\,|\,i\in I\right\}$$ for every $j$ in $J$ and
 $C$ is not contained in the Zariski-closure in $\cX$ of the pluricanonical divisor
 $\mathrm{div}_X(\omega)$ (the divisor of zeroes of
 $\omega$ on the $K$-variety $X$).

 \begin{theorem}[Theorem 4.5.5 in \cite{MuNi}]\label{thm-compsk}
The weight of $X$ with respect to $\omega$ is given by
$$\weight_{\omega}(X)=\min\left\{\frac{\nu_i+m}{N_i}\,|\,i\in I\right\}$$
and the skeleton $\Sk(X,\omega)$ is the union of the
$\omega$-essential faces of $\Sk(\cX)$. In particular, this union only depends on $X$ and $\omega$, and not on the choice of the model $\cX$.
 \end{theorem}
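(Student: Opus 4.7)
The overall strategy is to leverage Theorem \ref{thm-wt} to compute the weight function $\weight_\omega$ explicitly on the skeleton of the fixed proper $sncd$-model $\cX$, and then use the minimization characterization of $\Sk(X,\omega)$ to read off both the value $\weight_\omega(X)$ and the essential faces. Fix a face $\tau$ of $\Sk(\cX)$ corresponding to a connected component $C$ of an intersection $E_J=\bigcap_{i\in J}E_i$ with generic point $\xi$, and let $x$ be the monomial point on $\tau$ with barycentric coordinates $(\beta_i)_{i\in J}$, so that the associated monomial parameters are $\alpha_i=\beta_i/N_i$ in the sense of Section \ref{subsec-divmon}. By Theorem \ref{thm-wt}(2), $\weight_\omega(x)=v_x(f)$ for any rational function $f$ on $\cX$ whose divisor agrees with $\mathrm{div}_\cX(\omega)+m(\cX_k)_{\red}$ near $\xi$.

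Unwinding this via formula \eqref{eq-monexpl0} yields a dichotomy. When $C\not\subset\overline{\mathrm{div}_X(\omega)}$, the horizontal part of $\mathrm{div}_\cX(\omega)$ avoids $\xi$, so locally $f=u\prod_{i\in J}T_i^{\nu_i+m}$ with $u$ a unit, giving the affine formula
$$\weight_\omega(x)=\sum_{i\in J}\alpha_i(\nu_i+m)=\sum_{i\in J}\beta_i\frac{\nu_i+m}{N_i}.$$
Otherwise, a horizontal factor $g\in\mathcal{O}_{\cX,\xi}$ vanishing at $\xi$ enters $f$; since every $\alpha_i$ is strictly positive on the interior of $\tau$ and the constant term in the expansion of $g$ in $\widehat{\mathcal{O}}_{\cX,\xi}$ is zero, formula \eqref{eq-monexpl} forces $v_x(g)>0$, so the above formula becomes a strict lower bound in the interior. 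Minimizing the affine function on the simplex $\tau$ gives $\min_{i\in J}(\nu_i+m)/N_i$, attained at a vertex; taking the minimum over all faces gives $\min_{\Sk(\cX)}\weight_\omega=W$ where $W:=\min_{i\in I}(\nu_i+m)/N_i$. Because $X^{\an}=\widehat{\cX}_\eta$ by properness of $\cX$ and $\weight_\omega\geq\weight_\omega\circ\rho_\cX$ by Theorem \ref{thm-wt}(2), the infimum of $\weight_\omega$ over $X^{\divi}$ coincides with this minimum, which establishes the formula for $\weight_\omega(X)$.

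For the identification of $\Sk(X,\omega)$, I would show that the locus $S\subset\Sk(\cX)$ where $\weight_\omega=W$ coincides with the union of $\omega$-essential faces. On such a face the affine formula evaluates to the constant $W$ (all vertices attain the minimum) and there is no horizontal correction (by the hypothesis $C\not\subset\overline{\mathrm{div}_X(\omega)}$), so $\weight_\omega\equiv W$ there; on any other face either some vertex exceeds $W$ or the strict horizontal correction forces $\weight_\omega>W$ on a dense open subset. The inequality $\weight_\omega(y)\geq\weight_\omega(\rho_\cX(y))$ with equality only on $\Sk(\cX)$ then shows that a divisorial point $y\in X^{\divi}$ is $\omega$-essential if and only if it is a rational-coordinate point of some $\omega$-essential face. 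Since divisorial points are dense in every face of $\Sk(\cX)$ and $S$ is a compact, closed subset of $X^{\bir}$, taking closure produces $\Sk(X,\omega)=S$. The independence of $S$ from the choice of $\cX$ is then automatic from the intrinsic definition of $\Sk(X,\omega)$.

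The principal technical hurdle is the strict inequality in the case $C\subset\overline{\mathrm{div}_X(\omega)}$: one has to isolate carefully the horizontal factor in a local equation for $\mathrm{div}_\cX(\omega)+m(\cX_k)_{\red}$ at $\xi$ and show that its monomial valuation is strictly positive at every interior point of $\tau$. Everything else reduces to elementary affine geometry on the simplicial complex $|\Delta(\cX_k)|$ combined with the three properties of $\weight_\omega$ recorded in Theorem \ref{thm-wt}.
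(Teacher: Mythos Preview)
Your proposal is correct and follows essentially the same route as the paper: both reduce to Theorem \ref{thm-wt}\eqref{it:wt2}, identify $\Sk(X,\omega)$ with the minimizing locus of $\weight_\omega$ on $\Sk(\cX)$, and analyze $\weight_\omega$ face by face via the explicit formula for $v_x(f)$. The only cosmetic difference is that the paper packages your dichotomy as the statement that $\weight_\omega$ is piecewise affine and \emph{concave} on each face, with affinity precisely when $C\not\subset\overline{\mathrm{div}_X(\omega)}$; your direct computation of the horizontal correction $v_x(g)>0$ is exactly what underlies that concavity claim.
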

\begin{proof}
This follows easily from the properties of the weight function described in Theorem \ref{thm-wt}\eqref{it:wt2}.
 We have already explained in \eqref{sss-propweight} that $\Sk(X,\omega)$ is the locus of points in $\Sk(\cX)$ where $\weight_{\omega}$ reaches its minimal value. Recall that for every $i\in I$, the value of $\weight_{\omega}$ at the vertex of $\Sk(\cX)$ corresponding to $E_i$ is given by $(\nu_i+m)/N_i$.
  The explicit formula for the weight function on $\Sk(\cX)$ implies that it is
   piecewise affine and concave on every face of $\Sk(\cX)$, and it is affine on a face if and only if the corresponding subvariety $C$ of $\cX_k$
    is not contained in the closure of $\mathrm{div}_X(\omega)$; see \cite[4.5.5]{MuNi} for details.
\end{proof}

\begin{exam}
 Suppose that $R=\C\llbr t\rrbr$ and
that $X$ is a $K3$-surface over $K$. Assume that $X$ has an
$sncd$-model $\cX$ such that $\cX_k$ is reduced and
$\omega_{\cX/R}$ is trivial. Such models play an important role
in the classification of semi-stable degenerations of
$K3$-surfaces by Kulikov \cite{kulikov} and Persson-Pinkham \cite{pers-pink}. They have the
special property that $\Sk(X,\omega)=\Sk(\cX)$ for every volume form $\omega$ on $X$, since all multiplicities
$N_i$ are equal to one, and all $\nu_i$ are equal by the triviality of
$\omega_{\cX/R}$.
\end{exam}

\sss In \cite{MuNi} we proved the following variant of the
Shokurov-Koll\'ar Connectedness Theorem.
 Like the proofs of Shokurov and Koll\'ar, our proof is based on
 vanishing theorems: we proved
 generalizations of Kawamata-Viehweg Vanishing and Koll\'ar's
Torsion-free Theorem for varieties over power series rings in
characteristic zero by means of Greenberg approximation.

\begin{theorem}[Theorem 5.3.3 in \cite{MuNi}]\label{thm-conn}
Assume that the residue field $k$ of $K$ has characteristic zero. If $X$ is a geometrically connected, smooth and proper $K$-variety
of geometric genus one, and $\omega$ is a non-zero canonical form on $X$, then $Sk(X,\omega)$ is connected.
\end{theorem}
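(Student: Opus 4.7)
The plan is to imitate the Shokurov--Koll\'ar Connectedness Theorem, exploiting the analogy set up in Section~\ref{sec-KSdef}: the weight $\weight_\omega$ plays the role of log-discrepancy, the $\omega$-essential components of $\cX_k$ play the role of lc-centres computing a log-canonical threshold, and the hypothesis $p_g(X)=1$ will play the role of the bigness-and-nefness condition appearing in the classical theorem. Throughout one works on a fixed proper $sncd$-model $\cX$ of $X$, which exists since $k$ has characteristic zero; by Theorem~\ref{thm-compsk}, the choice of $\cX$ does not affect $\Sk(X,\omega)$.

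Write $\cX_k=\sum_{i\in I}N_iE_i$ and $\mathrm{div}_\cX(\omega)=\sum_{i\in I}\nu_iE_i+H$, with $H$ the Zariski closure of the effective divisor $\mathrm{div}_X(\omega)$; let $w=\weight_\omega(X)$ and $J_0=\{i\in I : (\nu_i+1)/N_i=w\}$. Using $\cX_k\sim 0$ and $K_\cX\sim\mathrm{div}_\cX(\omega)$, the identity
\[ K_\cX+(\cX_k)_{\red}-w\,\cX_k \;=\; \sum_{i\notin J_0}(\nu_i+1-wN_i)\,E_i + H \]
holds on $\cX$, and the right-hand side is effective and supported away from the essential components $E_i$, $i\in J_0$. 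From this one constructs, after possibly rescaling to clear denominators, a boundary $\Q$-divisor $\Delta$ on $\cX$ supported in $\cX_k\cup H$ with $\lfloor\Delta\rfloor=\sum_{i\in J_0}E_i$, such that $(\cX,\Delta)$ is log-canonical and $-(K_\cX+\Delta)$ is $\Q$-linearly equivalent to an effective combination of $\cX_k$; in particular it is numerically trivial relatively over $\Spec R$. By Theorem~\ref{thm-compsk}, the dual complex of the non-klt strata of $(\cX,\Delta)$ away from $H$ is precisely $\Sk(X,\omega)$.

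The connectedness of $\Sk(X,\omega)$ then reduces to connectedness of the non-klt locus $Z=\bigcup_{i\in J_0}E_i$ as an $R$-scheme, together with a combinatorial check on its strata. I would apply the relative Kawamata--Viehweg vanishing theorem and Koll\'ar's torsion-free theorem for proper schemes over $R=\C\llbr t\rrbr$ proved in \cite{MuNi}: from the multiplier-ideal sequence $0\to\cJ\to\mathcal{O}_\cX\to\mathcal{O}_Z\to 0$ and the vanishing $R^1\pi_*\cJ=0$, one obtains a surjection $\pi_*\mathcal{O}_\cX\twoheadrightarrow \pi_*\mathcal{O}_Z$. The hypothesis $p_g(X)=1$ together with $H^0(X,\omega_{X/K})\cong K$ translates, after twisting by the appropriate line bundle arising from $\Delta$, into the cyclicity of $\pi_*\mathcal{O}_Z$ as an $R$-module, forcing $Z$ to be geometrically connected. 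A standard Mayer--Vietoris argument on the simplicial complex of strata of $Z$ then upgrades this to connectedness of the dual complex.

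The main obstacle is the vanishing machinery. Kawamata--Viehweg vanishing and Koll\'ar's torsion-free theorem are classically formulated for projective varieties over a field, but $\cX$ is only proper over the complete DVR $R$ and has no ample line bundle in the fibre direction. The crucial technical input, carried out in \cite{MuNi} via Greenberg approximation, is to approximate $\cX$ by complex-algebraic families of projective varieties over $\C$, apply classical vanishing there, and transfer the statement back to $\cX$. A secondary subtlety is ensuring that the boundary $\Delta$ can be chosen with $\lfloor\Delta\rfloor$ exactly equal to $\sum_{i\in J_0}E_i$ rather than a larger subdivisor of $(\cX_k)_{\red}$; this is where the explicit shape of $\weight_\omega$ on $\Sk(\cX)$ recorded in Theorem~\ref{thm-wt} is used.
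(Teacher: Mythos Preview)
Your proposal is correct and follows essentially the same approach as the paper: the survey text here does not give a detailed argument but only records that the proof imitates Shokurov--Koll\'ar connectedness via generalizations of Kawamata--Viehweg vanishing and Koll\'ar's torsion-free theorem to schemes over power series rings, established by Greenberg approximation in \cite{MuNi}. Your outline in fact expands on this sketch, correctly identifying the log pair $(\cX,\Delta)$ whose non-klt locus is $\bigcup_{i\in J_0}E_i$, the role of the multiplier-ideal sequence and the surjection $\pi_*\mathcal{O}_\cX\twoheadrightarrow\pi_*\mathcal{O}_Z$, and the two genuine technical obstacles (transferring vanishing to the non-algebraic setting; arranging $\lfloor\Delta\rfloor$ to be exactly the essential locus).
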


 One can say much more using advanced tools from the Minimal Model Program, as we will explain in the following section.

\section{The essential skeleton and the Minimal Model Program}
\subsection{The essential skeleton}
\sss Throughout this section, we assume that the residue field $k$ of $K$ has characteristic zero.
  Let $X$ be a connected, smooth and projective $K$-variety (the projectivity condition is needed to apply results from the Minimal Model Program).
   Let $\cX$ be a proper $sncd$-model of $X$. We have seen in Theorem \ref{thm-compsk} that for every
non-zero pluricanonical form $\omega$ on $X$,  the Kontsevich-Soibelman skeleton $\Sk(X,\omega)$ singles out certain faces of $\Sk(\cX)$ that do not depend on the model
 $\cX$. In \cite[\S4.6]{MuNi} we defined the {\em essential skeleton} $\Sk(X)$ of $X$ as the union of the Kontsevich-Soibelman skeleta $\Sk(X,\omega)$ for all non-zero pluricanonical forms $\omega$ on $X$.

 \sss If $\omega_{X/K}$ is trivial and $\omega$ is a volume form on $X$, then it is not hard to see that $\Sk(X)=\Sk(X,\omega)$: multiplying $\omega$
  with an element $\lambda$ in $K^{\ast}$ simply shifts
the weight function by $v_K(\lambda)$, and for every $m>0$, the
 space of $m$-pluricanonical forms on $X$ is generated by the $m$-th tensor power of $\omega$.
 It is not true for general $X$, however,
  that $\Sk(X)=\Sk(X,\omega)$ for some fixed pluricanonical form $\omega$ on $X$.

\sss Without suitable conditions on $X$, we cannot hope that $\Sk(X)$ is a strong deformation retract of $X^{\an}$. For instance, if $X$ is rational (e.g., a projective space $\mathbb{P}^n_K$) then all pluricanonical forms on $X$ are zero and the essential skeleton is empty. However, Chenyang Xu and the author proved in \cite{NiXu} that
 $\Sk(X)$ is a strong deformation retract of $X^{\an}$ if $X$ has ``enough'' pluricanonical forms. Our proof is based on the Minimal Model Program, and in particular on the results in \cite{dFKX}. We will now briefly explain the main ideas.

\subsection{$dlt$ models}
\sss If $X$ is a curve of genus $\geq 1$, then $X$ has a unique minimal $sncd$-model $\cX$, and thus a canonical Berkovich skeleton $\Sk(\cX)$.
 If $X$ has dimension at least two, however, minimal $sncd$-models no longer exist in general. In order to get a good notion of minimal model, we have to enlarge the class
  of $sncd$-models to so-called (good) $dlt$-models. The abbreviation $dlt$ stands for {\em divisorially log terminal}. The precise definition of a $dlt$-model $\cX$ is quite technical; we do not give it here but refer to \cite[\S2.1]{NiXu} instead. The basic idea is that we allow certain mild singularities on $\cX$, in accordance with the general philosophy of the Minimal Model Program. The set of points of $\cX$ where $\cX$ is regular and $\cX_k$ is a strict normal crossings divisor is an open subscheme of $\cX$ that we denote by $\cX^{\sncd}$. The definition of a $dlt$-model guarantees that $\cX^{\sncd}$ is still sufficiently large to capture all the important information about the skeleton; we set $$\Sk(\cX):=\Sk(\cX^{\sncd})\subset X^{\bir}.$$

  \sss A $dlt$-model $\cX$ of $X$ is called minimal if the line bundle $\omega_{\cX/R}((\cX_k)_{\red})$ is semi-ample, which means that some power of this line bundle  is generated by global sections. Fundamental theorems in birational geometry imply that $X$ has a minimal $dlt$-model if and only if the canonical line bundle $\omega_{X/K}$ is semi-ample (we refer to \cite[2.2.6]{NiXu} for detailed references).  To be precise, we should assume that $X$ is defined over an algebraic curve because the necessary tools from the Minimal Model Program have only been developed under that assumption, but we will ignore this issue here; if $\omega_{X/K}$ is trivial one can get rid of the algebraicity condition by using tools from logarithmic geometry \cite[\S4.2]{NiXu}.

\sss    Minimal $dlt$-models are not unique, but they are closely related (birationally crepant) and the skeleton $\Sk(\cX)$ does not depend on the choice of the minimal $dlt$-model $\cX$. One of the main results in \cite{NiXu} is the following.

\begin{theorem}[Theoren 3.3.4 in \cite{NiXu}]
If $\omega_{X/K}$ is semi-ample and $\cX$ is any minimal $dlt$-model of $X$, then the skeleton $\Sk(\cX)$ is equal to the essential skeleton $\Sk(X)$ of $X$.
\end{theorem}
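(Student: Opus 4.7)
The plan is to prove both inclusions $\Sk(X)\subseteq \Sk(\cX)$ and $\Sk(\cX)\subseteq \Sk(X)$ by combining semi-ampleness of the line bundle $\cL:=\omega_{\cX/R}((\cX_k)_{\red})$ with a discrepancy computation on a log resolution. Let $f\colon \cY\to \cX$ be a proper $sncd$-model of $X$ obtained by resolving the singularities of $\cX$ away from $\cX^{\sncd}$. A non-zero $m$-pluricanonical form $\omega$ on $X$ (with $m$ taken sufficiently divisible to avoid $\Q$-Cartier issues) corresponds to a rational section $s$ of $\cL^{\otimes m}$ on $\cX$ with $\mathrm{div}_{\cX}(s)=\mathrm{div}_{\cX}(\omega)+m(\cX_k)_{\red}$. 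Scaling $\omega$ by a suitable power of a uniformizer $\pi$ merely translates $\weight_{\omega}$ by a constant and leaves $\Sk(X,\omega)$ unchanged, so I may assume that $s$ is an honest global (not just rational) section. Combining the identity $\mathrm{div}_{\cY}(\omega)=f^{\ast}\mathrm{div}_{\cX}(\omega)+mK_{\cY/\cX}$ with the log pullback
\[
K_{\cY/R}+(\cY_k)_{\red}=f^{\ast}\bigl(K_{\cX/R}+(\cX_k)_{\red}\bigr)+\sum_E a_E\,E,
\]
where $a_E:=a(E;\cX,(\cX_k)_{\red})$ is the log discrepancy, one obtains for every prime component $E$ of $\cY_k$ the master formula
\[
\weight_{\omega}(v_E)\;=\;\frac{\mathrm{ord}_E(f^{\ast}\mathrm{div}_{\cX}(s))}{N_E}\;+\;m\cdot\frac{a_E}{N_E}.
\]
Both summands are non-negative: the first because $s$ is a global section, the second because $(\cX,(\cX_k)_{\red})$ is $dlt$.

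For the inclusion $\Sk(X,\omega)\subseteq \Sk(\cX)$: the master formula forces $\weight_{\omega}(v_E)=0$ only when $a_E=0$ and $s$ is non-zero at the center of $v_E$ on $\cX$. A standard property of $dlt$ pairs identifies the log canonical places (the divisorial valuations with $a_E=0$) with the divisorial points of $\Sk(\cX^{\sncd})=\Sk(\cX)$. Hence every $\omega$-essential divisorial point lies in $\Sk(\cX)$; taking closures in $X^{\bir}$ (and using that $\Sk(\cX)$ is compact, hence closed) yields $\Sk(X,\omega)\subseteq \Sk(\cX)$, and varying $\omega$ gives $\Sk(X)\subseteq \Sk(\cX)$.

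For the reverse inclusion $\Sk(\cX)\subseteq \Sk(X)$: fix a face $\sigma$ of $\Sk(\cX)$ corresponding to a connected component $C$ of a stratum $E_J^{\cX}=\cap_{j\in J}E_j^{\cX}$ of $\cX^{\sncd}_k$. Semi-ampleness of $\cL$ supplies, for $m\gg 0$, a global section $s$ of $\cL^{\otimes m}$ that is non-zero at the generic points of $C$ and of every $E_j^{\cX}$ for $j\in J$ (pick $s$ in general position among finitely many generating sections). The resulting pluricanonical form $\omega$ satisfies $\weight_{\omega}(v_{E_j^{\cX}})=0$ for each $j\in J$ by the master formula together with $a_{E_j^{\cX}}=0$, while the non-vanishing of $s$ at the generic point of $C$ prevents $C$ from being contained in the closure of $\mathrm{div}_X(\omega)$. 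Theorem \ref{thm-compsk}, applied on $\cY$, then identifies $\sigma$ as an $\omega$-essential face, so $\sigma\subseteq \Sk(X,\omega)\subseteq \Sk(X)$; running over all faces of $\Sk(\cX)$ yields the desired inclusion.

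The main technical obstacle is the master formula, and in particular the non-negativity $a_E\geq 0$ together with the identification of the vanishing locus $\{a_E=0\}$ with the set of divisorial points of $\Sk(\cX)$. This is exactly where the $dlt$ hypothesis is used, and it explains why the full machinery of the Minimal Model Program (together with the semi-ampleness of $\omega_{X/K}$, which produces the minimal $dlt$-model in the first place) enters the argument. Semi-ampleness of $\cL$ itself is used only in the second inclusion, where it guarantees enough pluricanonical forms to detect every face of the skeleton.
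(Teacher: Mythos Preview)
The present paper is a survey and does not prove this statement; it merely cites Theorem~3.3.4 of \cite{NiXu}. Your overall strategy is the natural one and is essentially that of \cite{NiXu}: express $\weight_\omega$ on a log resolution $f\colon\cY\to\cX$ via the master formula as the sum of a ``section term'' $\ord_E(f^\ast\mathrm{div}_\cX(s))/N_E$ and a ``discrepancy term'' $m\,a_E/N_E$, use non-negativity of both summands, and invoke semi-ampleness of $\cL$ to manufacture enough sections for the reverse inclusion. Your argument for $\Sk(\cX)\subseteq\Sk(X)$ is correct.

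There is, however, a genuine gap in your argument for $\Sk(X,\omega)\subseteq\Sk(\cX)$. Scaling $\omega$ so that $s$ becomes a global section of $\cL^{\otimes m}$ yields $\weight_\omega(v_E)\geq 0$ for every $E$, but it does \emph{not} force $\weight_\omega(X)=0$: the infimum could be strictly positive. Thus your assertion that an $\omega$-essential point has $\weight_\omega(v_E)=0$ (and hence $a_E=0$) is unjustified; a priori the minimum could be attained at some $v_E$ with $a_E>0$ and a compensating small section term. What is missing is the observation that the section term alone already dominates the minimum taken over $\Sk(\cX)$. Writing $b_i=\ord_E(f^\ast E_i^\cX)\geq 0$, the identity $\cY_k=f^\ast\cX_k$ gives $N_E=\sum_i N_i b_i$, and effectivity of $\mathrm{div}_\cX(s)$ then yields
\[
\frac{\ord_E(f^\ast\mathrm{div}_\cX(s))}{N_E}\ \geq\ \frac{\sum_i b_i\,\ord_{E_i^\cX}(s)}{\sum_i N_i b_i}\ \geq\ \min_i\frac{\ord_{E_i^\cX}(s)}{N_i}\ =\ \min_i\,\weight_\omega(v_{E_i^\cX}),
\]
the last equality using $a_{E_i^\cX}=0$. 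Hence $\weight_\omega(v_E)\geq\min_i\weight_\omega(v_{E_i^\cX})+m\,a_E/N_E$, so the minimum of $\weight_\omega$ is already computed on the vertices of $\Sk(\cX)$, and any divisorial point achieving it must have $a_E=0$. With this inequality in place, your identification of the log canonical places of $(\cX,(\cX_k)_{\red})$ with the divisorial points of $\Sk(\cX)$ and the closure argument go through.
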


%\begin{exam}
%Assume that $X$ is a geometrically connected smooth projective curve over $K$. Then $\omega_{X/K}$ is semi-ample if and only if the genus of $X$ is at least one. In that case, the %minimal $sncd$-model $\cX$ of
%$X$ is a minimal $dlt$-model. Thus the essential skeleton $\Sk(X)$ of $X$ is precisely $\Sk(\cX)$.
%\end{exam}
% Klopt dit wel? Moeten we niet alle rationale "staarten" samentrekken?

\sss If $\cX'$ is any proper $sncd$-model of $X$, then the Minimal Model Program tells us how to modify $\cX'$ into a minimal $dlt$-model $\cX$ by a series of divisorial contractions and flips.
 The effect of the steps in  the Minimal Model Program on the Berkovich skeleton $\Sk(\cX')$ was carefully studied in \cite{dFKX}, and these authors proved that
 $\Sk(\cX)$ can be obtained from $\Sk(\cX')$ by means of a sequence of {\em elementary collapses}, combinatorial operations on simplicial complexes which are, in particular, strong deformation retracts. Since we already know that $\Sk(\cX')$ is a strong deformation retract of $X^{\an}$ by Theorem \ref{thm-sk}, we obtain the following result.

\begin{theorem}[Corollary 3.3.6 in \cite{NiXu}]\label{thm-defret}
If $\omega_{X/K}$ is semi-ample then the essential skeleton $\Sk(X)$ is a strong deformation retract of $X^{\an}$.
\end{theorem}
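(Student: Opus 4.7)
The plan is to chain together three deformation-retract statements that have already been set up in the excerpt, so that the theorem reduces to a short composition argument. All the heavy lifting is done by the results cited just before the statement, and the task is really to verify that the pieces fit together.

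First, I would invoke resolution of singularities (which is available since $k$ has characteristic zero) to produce a proper $sncd$-model $\cX'$ of $X$. By Theorem \ref{thm-sk} (and Corollary \ref{cor-sk}), the Berkovich skeleton $\Sk(\cX')$ is a strong deformation retract of $X^{\an}$; call this retraction $H_1$. Next, using the semi-ampleness hypothesis on $\omega_{X/K}$, the Minimal Model Program produces a minimal $dlt$-model $\cX$ of $X$ (as recalled just before the statement, modulo the algebraicity issue, which is handled either by assuming $X$ is defined over an algebraic curve or, in the trivial canonical sheaf case, by the logarithmic-geometry argument from \cite[\S4.2]{NiXu}). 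The MMP realizes the passage from $\cX'$ to $\cX$ by a finite sequence of divisorial contractions and flips.

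The second ingredient is the analysis of these MMP steps carried out in \cite{dFKX}: each divisorial contraction and each flip induces an \emph{elementary collapse} on the associated dual complexes, and therefore on the Berkovich skeleta via the homeomorphism of Proposition \ref{prop-homeo} (extended to the $dlt$ setting via $\cX^{\sncd}$). Consequently $\Sk(\cX)$ is obtained from $\Sk(\cX')$ by a finite sequence of elementary collapses. Each elementary collapse is, by definition, a strong deformation retract, and the composition of finitely many strong deformation retracts is again a strong deformation retract; this yields a continuous homotopy $H_2\colon \Sk(\cX')\times[0,1]\to \Sk(\cX')$ retracting $\Sk(\cX')$ onto $\Sk(\cX)$ while fixing $\Sk(\cX)$ pointwise at every time.

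Finally, by Theorem 3.3.4 recalled immediately above the statement, the semi-ampleness of $\omega_{X/K}$ ensures that $\Sk(\cX)$ coincides (as a subspace of $X^{\bir}$, hence of $X^{\an}$) with the essential skeleton $\Sk(X)$. So it only remains to splice $H_1$ and $H_2$: define
\[
H\colon X^{\an}\times[0,1]\to X^{\an},\qquad H(x,t)=\begin{cases} H_1(x,2t) & 0\le t\le 1/2,\\ H_2(H_1(x,1),2t-1) & 1/2\le t\le 1,\end{cases}
\]
which is continuous by the pasting lemma, agrees with the identity at $t=0$, has image in $\Sk(X)$ at $t=1$, and fixes $\Sk(X)$ at all times (since $\Sk(X)\subset \Sk(\cX')$ is fixed by $H_1$, and $\Sk(X)=\Sk(\cX)$ is fixed by $H_2$). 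This exhibits $\Sk(X)$ as a strong deformation retract of $X^{\an}$.

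The only real obstacle is a bookkeeping one rather than a conceptual one: one must check that the elementary collapses of \cite{dFKX} genuinely lift to strong deformation retracts at the level of $\Sk(\cX')\subset X^{\an}$ (and not merely of the abstract simplicial complex $|\Delta(\cX'_k)|$), and that the $dlt$ skeleton $\Sk(\cX):=\Sk(\cX^{\sncd})$ sits inside $\Sk(\cX')$ as a subcomplex in the way that makes the collapse argument applicable. Both of these are addressed in \cite{dFKX} and \cite{NiXu}, so the proof really is the short composition above.
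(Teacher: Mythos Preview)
Your proposal is correct and follows essentially the same approach as the paper: both arguments compose the strong deformation retraction $X^{\an}\to\Sk(\cX')$ from Theorem~\ref{thm-sk} with the sequence of elementary collapses $\Sk(\cX')\to\Sk(\cX)$ supplied by \cite{dFKX}, and then invoke Theorem~3.3.4 of \cite{NiXu} to identify $\Sk(\cX)$ with $\Sk(X)$. Your write-up is in fact more explicit than the paper's, which simply states these three ingredients in the paragraph preceding the theorem and declares the result.
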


\sss Under certain conditions on $X$, one can use further results from the Minimal Model Program to obtain information about the topological properties of
 $\Sk(X)$. For instance, if $\omega_{X/K}$ is trivial, the residue field $k$ is algebraically closed and the skeleton $\Sk(X)$ has maximal dimension (that is, the same dimension as $X$) then results by Koll{\'a}r and Kov{\'a}cs imply that $\Sk(X)$ is a closed pseudo-manifold (see \cite[4.1.7 and 4.2.4]{NiXu}).

\sss It would be quite interesting to have a proof of Theorem \ref{thm-defret} that does not make use of the Minimal Model Program and the arguments in \cite{dFKX}, but instead uses the properties of the weight function from Section \ref{sec-weight} and the geometric structure of the Berkovich space $X^{\an}$. A possible approach is the following. Assume that $\omega_{X/K}$ is trivial and let $\omega$ be a volume form on $X$. Then the essential skeleton $\Sk(X)$ of $X$ is the locus where the weight function $\weight_{\omega}$ on $X^{\an}$ reaches its minimal value, and we have seen in Theorem \ref{thm-wt} that it is strictly increasing if one moves away from the Berkovich skeleton of any $sncd$-model of $X$. It is tempting to speculate that one can attach a gradient vector field on $X^{\an}$ to $\weight_{\omega}$ that induces a flow that contracts $X^{\an}$ onto $\Sk(X)$. We refer to \cite{NiXu-poles} for partial results in this direction and applications to the study of motivic zeta functions.

\end{document}